\tikzset{partition/.style={fill,circle,inner sep=1pt}}
\tikzset{partition/.style={fill,circle,inner sep=1pt},
         part/.style={baseline=0,scale=0.5,bend left=45},
         partlabel/.style={below}}
\tikzstyle{pnt}=[draw,ellipse,fill,inner sep=1pt]
\tikzstyle{opnt}=[draw,ellipse,inner sep=1pt]
\tikzstyle{opnt}=[ ]
\tikzstyle{pntt}=[draw,ellipse,fill,inner sep=0.5pt]
\tikzstyle{point}=[draw,ellipse,fill,inner sep=2pt]
\DeclareMathOperator{\Tr}{Tr}
\theoremstyle{plain}
\newtheorem{theorem}{Theorem}[section]
  \newtheorem{lemma}[theorem]{Lemma}
  \newtheorem{proposition}[theorem]{Proposition}
  \newtheorem{corollary}[theorem]{Corollary}
  	\newtheorem{definition}[theorem]{Definition} 
  	\theoremstyle{definition}
  	\newtheorem{example}[theorem]{Example} 
  \theoremstyle{remark}  	
  	\newtheorem{remark}[theorem]{Remark}
	\newtheorem{note}[theorem]{Note}
\newcommand{\R}{\mathbb{R}}
\begin{document}

\begin{frontmatter}
\title{Random Walks on the BMW Monoid: an Algebraic Approach}
\runtitle{Random Walks on the BMW Monoid}

\begin{aug}
\author{\fnms{Sarah} \snm{Wolff}\thanksref{t1}\ead[label=e1]{wolffs@denison.edu}}

\thankstext{t1}{Partially supported by the NSF GRFP under Grant No. DGE-1313911}
\runauthor{S. Wolff}

\affiliation{Denison University }

\address{Department of Mathematics and Computer Science\\
Denison University\\ %Granville, OH 43023\\
\printead{e1}\\
\phantom{E-mail:\ }}

\end{aug}

\begin{abstract}

We consider Metropolis-based systematic scan algorithms for generating Birman-Murakami-Wenzl (BMW) monoid basis elements of the BMW algebra. As the BMW monoid consists of tangle diagrams, these scanning strategies can be rephrased as random walks on links and tangles. We translate these walks into left multiplication operators in the corresponding BMW algebra. Taking this algebraic perspective enables the use of tools from representation theory to analyze the walks; in particular, we develop a norm arising from a trace function on the BMW algebra to analyze the time to stationarity of the walks. 
\end{abstract}

\begin{keyword}[class=MSC]
\kwd[Primary ]{60J10}
\kwd{60B15}
\kwd[; secondary ]{65C05, 65C40, 17B20}
\end{keyword}

\begin{keyword}
\kwd{Metropolis algorithm}
\kwd{Markov chains}
\kwd{random walks}
\kwd{representation theory}
\kwd{Fourier analysis}
\end{keyword}

\end{frontmatter}

%\keywords{Metropolis Algorithm, Systematic Scans, Random Walks, Representation Theory, Semisimple Algebras }

\section{Introduction} 

Studying the convergence of random walks on finite groups, and in particular the problem of generating group elements according to a fixed probability distribution has a long history \cite{scarabotti, diaconis, diacoste2, saloff}. Of particular interest for the purposes of this paper is the important work of Diaconis and Ram \cite{diaram}, who compare systematic scanning techniques with random scanning techniques in the context of generating elements of a finite Coxeter group $W$ using the Metropolis algorithm. 

First introduced by Metropolis, Rosenbluth, Rosenbluth, Teller, and Teller \cite{met}, the Metropolis algorithm gives a method for sampling from a probability distribution $\pi$ by modifying an existing Markov chain to produce a new chain with stationary distribution $\pi$. This proves particularly useful for simulating configurations of particles with an associated energy (e.g., the influence that neighboring particles exert on each other). Later applications of the Metropolis algorithm include the simulation of Ising models, initially developed to model a ferromagnet but (surprisingly) also of use in image analysis and Gibbs sampling \cite{cai, fishman}. See \cite{liu} for additional applications.
The Metropolis algorithm has the advantage of being straightforward to construct and implement; however, in analyzing the rate of convergence to $\pi$ (the \textit{mixing time})  rigorous bounds are often dependent on the specific situation (see \cite{pedthesis} for a review of the existing literature for spin systems alone).  Further, these methods are most often examples of \textit{random scan Markov chains} in that the process involved is that of selecting a site or set of sites to update at random. 
%if the process of updating a particular site $i$ can be characterized by the application of a Markov chain $M_i$, then a random scan has form $M_{i_l}\dots M_{i_2}M_{i_1}$ for a sequence $\{i_l\}$ chosen at random.
%
A more intuitively appealing and often more frequently used method in experimental work is that of a \textit{systematic scan Markov chain}: a method to cycle through and update the sites in a deterministic order.
%, i.e., a deterministic choice of the sequence $\{i_l\}$. 
While such scanning strategies may seem intuitive for use in sampling from $\pi$, they have proven difficult to analyze in many situations.
%We first recall the Metropolis algorithm. Given a symmetric Markov chain $P$ and a probability distribution $\pi$, the Metropolis algorithm modifies $P$ to produce a reversible Markov chain $M$ with stationary distribution $\pi$.
%
%$$M(x,y)=\left\{
%\begin{array}{ll} 
%\displaystyle P(x,y)& \text{if } x\neq y \text{ and } \pi(y)\geq \pi(x),\\
%\displaystyle P(x,y)\frac{\pi(y)}{\pi(x)}& \text{if } x\neq y \text{ and } \pi(y)<\pi(x),\\
%\displaystyle P(x,x)+\sum_{\pi(z)<\pi(x)} P(x,z)\left(1-\frac{\pi(z)}{\pi(x)}\right)& \text{if } x=y.
%\end{array}\right.$$

In \cite{diaram} Diaconis and Ram use the Metropolis algorithm construction to produce Markov chains $M_1,M_2,\dots,$ $M_{n-1}$ corresponding to multiplication by the generators $r_1,\cdots, r_{n-1}$ of a Coxeter group $W$. 
%For example, for each generator $r_i=(i\;i+1)$ of the symmetric group $S_n$, let
%$$\begin{array}{llllr}   P_i(x,y) = \left\{
%     \begin{array}{ll}
%      1 &  \text{if}\;y=r_ix,\\
%       0 &  \text{else},
%     \end{array}
%   \right.
%&\text{and let }
% \displaystyle \pi(x)=\frac{\theta^{-l_{S}(x)}}{\displaystyle\sum_{w\in S_n} \theta^{-l_{S}(w)}},\end{array}$$ 
%for $l_{S}$ the length function on words in $S_n$. 
These Markov chains provide systematic scanning strategies for multiplying by generators of $W$ (for an explicit description of $M_i$ and the corresponding random walk see Section \ref{thewalk}). Diaconis and Ram \cite{diaram} show that convergence of the short systematic scan occurs in the same number of steps as that of a random scan.
%However, results for different scanning techniques or probability distributions remain open. 

%$$\begin{array}{ll}
%      \displaystyle \frac{1}{n-1}\sum_{i=1}^{n-1} M_i & \text{(random scan)},\\
%       M_1M_2\cdots M_{n-1}M_{n-1}\cdots M_2M_1 & \text{(short systematic scan)},\\
%       (M_1\cdots M_{n-1}M_{n-1}\cdots M_1)\cdots (M_1M_2M_2M_1)(M_1M_1) & \text{(long systematic scan)}.\\
%     \end{array}
%$$ 

%, i.e., choosing a random sequence of indices $\{i_\ell\}_{\ell=1}^\infty$. 
%In the context of graph colorings, Dyer et al. compare systematic scans with random scans for sampling proper $q$-colorings of paths for $q\geq 4$, in which a vertex is assigned a new color $c$ only if none of its neighbors are colored by $c$ \cite{dyeretall}. However, results for more general graphs have resisted analysis. 

%Fishman \cite{fishman} gives an overview of scanning strategies, while Diaconis and Saloff-Coste's survey \cite{diacoste} provides further applications of the Metropolis algorithm. 

The key insight that allows for analysis of the Metropolis scans is the translation of the Markov chains $M_i$ into left multiplication operators in the \textit{Iwahori-Hecke Algebra} corresponding to $W$. 
Hecke algebras arise naturally in the extension of Schur-Weyl duality to general centralizer algebras. More relevant for this paper is an alternative definition of the Hecke algebra in terms of \textit{braids}. The thesis \cite{simplylaced} gives 
a thorough introduction to braids and their relationship with the Hecke algebra. 

Let $b_1,\dots, b_n\in\R$ with $b_1<\cdots <b_n$. An $n$-strand braid is a disjoint union of $n$ smooth curves in $\R^3$ connecting the points $\{(b_1,1,0),(b_2,1,0),\dots, (b_n,1,0)\}$ with $\{(b_1,0,0),(b_2,0,0),\dots, (b_n,0,0)\}$ so that they intersect each parallel plane $y=t$ as $t$ ranges between $0$ and $1$ only once. A braid can be represented by its 2-dimensional projection, its \textit{braid diagram}, and connecting the top strands to the bottom strands of a braid diagram gives rise to a \textit{link}. Two links are \textit{isotopic} if they are related by a sequence of Reidemeister moves (defined in Section \ref{BMWdefs}), and, in fact, every isotopic oriented link can be represented by the closure of a braid \cite{simplylaced}. The braid group has a presentation in terms of generators $T_{r_1},\dots,  T_{r_{n-1}}$ corresponding to certain braid diagrams. Remarkably, adding a quadratic relation to this presentation yields the Hecke algebra.%, and in this case the elements $T_{r_i}$ are exactly those of Figure \ref{gen} in Setion \ref{BMWdefs}.

Under this definition of the Hecke algebra there is a natural generalization to the \textit{Birman-Murakami-Wenzl (BMW) algebra}. %By generalizing the notion of an $n$-strand braid to allow any two points
By now allowing any two points in $\{(b_1,1,0),(b_2,1,0),\dots, (b_n,1,0)\}\cup\{(b_1,0,0),(b_2,0,0),\dots, (b_n,0,0)\}$  to be connected, we have the definition of an \textit{n-tangle}, which gives rise to the idea of a \textit{tangle diagram} by considering its two-dimensional projection. We define tangle diagrams in detail in Section \ref{BMWdefs}. As with the algebra associated to braid diagrams, an algebra is associated to these tangle diagrams. Defined independently as the \textit{Kauffman tangle algebra} by Murakami \cite{murakami} and algebraically by Birman and Wenzl \cite{birmanwenzl}, it was shown in an unpublished paper by Wasserman \cite{wasser} that these two notions are equivalent, giving rise to the single BMW algebra. 

%Further, just as the Hecke algebra is in Schur-Weyl duality with the quantum universal enveloping algebra of the general linear group, the BMW algebra is in Schur-Weyl duality with the quantum universal enveloping algebra of the orthogonal group \cite{hal}. This duality has motivated the study of the representation theory of the BMW algebra and its generalizations \cite{cohen, zajj2, zajj1, hal, leduc-ram}.

%A third perspective would note that just as the Hecke algebra is a $q$-analogue of the symmetric group, the BMW algebra is a $q$-analogue of the Brauer algebra, as defined in \ref{Brauerback}.

%, for $M$ a vector space $A\subseteq\End{M}$, and $B$ the centralizer of $A$ (in $\End{M})$), we have that $B$ is semisimple, $A$ centralizes $B$, and $$M\cong \bigoplus A^\lambda\otimes B^\lambda,$$ for $A^\lambda$ (respectively, $B^\lambda$) simple $A$-modules (respectively, $B$-modules).
In \cite{diaram}, Diaconis and Ram consider the problem of systematically generating elements of a finite Coxeter group $W$. In terms of the group algebra $\mathbb{C}[W]$, this problem is equivalent to generating elements of the basis $W$ of $\mathbb{C}[W]$. We extend these ideas to the BMW algebra. The Metropolis algorithm in this context gives rise to systematic scanning strategies for generating basis elements via multiplication of generators.
As the diagrams forming the BMW monoid basis of the BMW algebra are tangles, scanning strategies for generating BMW monoid elements have applications arising in physics: random generation of links and tangles has been of use in \cite{rangenknots, componentlinks, enumlink}. As in \cite{diaram}, our algorithm gives rise to a natural random walk, in this case on the BMW and Brauer monoids, defined in Section \ref{thewalk}. We translate the random walk into multiplication in the BMW algebra: for $\mathscr{T}_{r_i}, \mathscr{T}_{e_i}$ left multiplication operators in the BMW algebra.

\begin{theorem}\label{leftmult1} The chain $K_i$ arising from the Metropolis algorithm is the same as the matrix of left multiplication by 
$$\theta\mathscr{T}_{r_i}+(1-\theta)\mathscr{T}_{e_i}.$$
\end{theorem}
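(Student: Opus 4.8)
The plan is to verify the identity entrywise in the BMW monoid basis, following the strategy Diaconis and Ram use for the Iwahori--Hecke algebra in \cite{diaram}. I would begin by recalling from Section~\ref{thewalk} the full data defining $K_i$: the proposal step (from a tangle diagram $b$, left-multiply by $r_i$ with probability $\theta$ and by $e_i$ with probability $1-\theta$), the Metropolis acceptance rule $\min\{1,\pi(\cdot)/\pi(b)\}$ for the target $\pi$, and the specialization of the BMW parameters --- the framing parameter $\ell$, the crossing parameter $z$, and the loop value $x$ --- in terms of $\theta$ and the weights defining $\pi$. With these recalled, the theorem is equivalent to the assertion that for every basis diagram $b$ the expansion of $\theta\,r_i b+(1-\theta)\,e_i b$ in the monoid basis --- read as the $b$-th column of a matrix (equivalently, up to the standard transpose convention, the $b$-th row) --- reproduces the one-step transition law of $K_i$ out of $b$.

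The comparison is a finite case analysis, organized by how $r_i$ and $e_i$ interact with $b$ at strand positions $i,i+1$ and carried out with the BMW diagram relations recalled in Section~\ref{BMWdefs}. The generic case is that $r_i b$ and $e_i b$ are each a single basis diagram (a new crossing added, or two strands of $b$ fused, with no closed component created), so the corresponding column of $\theta\mathscr{T}_{r_i}+(1-\theta)\mathscr{T}_{e_i}$ has weights $\theta$ and $1-\theta$ and no diagonal mass; matching this to $K_i$ amounts to checking that $\pi$ is non-decreasing along a crossing-addition and along a strand-fusion, so that both Metropolis proposals are accepted with probability one. The non-generic cases are where a Reidemeister-I curl or a closed loop appears --- e.g.\ when the top of $b$ joins $i$ and $i+1$, so that $r_i b=\ell^{\mp1}b$ and $e_i b=x\,b$ --- or where resolving the new crossing requires the BMW quadratic relation $r_i^2=1+z(r_i-\ell^{-1}e_i)$, producing a multi-term expansion of $r_i b$ with a nonzero coefficient on $b$ itself. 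In each such case one matches the resulting coefficients (scalars built from $1,\ell^{\pm1},z,x$) against the Metropolis transition and holding probabilities; the curl/loop case, for instance, forces a defining parameter identity of the form $\theta\ell^{\mp1}+(1-\theta)x=1$, and more generally the acceptance ratios $\min\{1,\pi(\cdot)/\pi(\cdot)\}$ dictate how $\pi$ must weight crossing number and through-strand count.

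I expect the heart of the proof --- and its main obstacle --- to be exactly this bookkeeping: enumerating all the diagram-interaction cases, including those without analogue in the Coxeter setting (where $e_i$ can create a loop or lower the number of through-strands), and checking that in every one the BMW structure constants become precisely the Metropolis probabilities under a single consistent specialization of $\ell,z,x$ and the weights of $\pi$. Conceptually the point is that the individual operators $\mathscr{T}_{r_i}$ and $\mathscr{T}_{e_i}$ are not stochastic --- they carry ``defects'' coming from $\ell^{\mp1}$ curls, $x$ loops, and the extra terms of the quadratic relation --- and the specialization is chosen exactly so that these defects cancel in the convex combination $\theta\mathscr{T}_{r_i}+(1-\theta)\mathscr{T}_{e_i}$ against the holding mass that Metropolis rejection places on the diagonal. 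Once the parameters are pinned down so that this cancellation is exact, each individual case is a short diagram computation and assembling them yields the claimed matrix identity.
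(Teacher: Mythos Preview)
Your proposal rests on a misreading of the definitions in Section~\ref{thewalk}, and with those definitions wrong the case analysis you sketch cannot close. Two points in particular:

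\textbf{(1) The proposal kernel is not a $\theta/(1-\theta)$ mix of $r_i$ and $e_i$.} The paper's $P'_i$ is deterministic: from $T_d$ one always proposes $T_{r_id}$. The $\theta$ enters only through the Metropolis acceptance rule for $\pi(T_d)\propto\theta^{-L(T_d)}$, and after Proposition~\ref{translate walk} the resulting chain has the two-case form: move to $T_{r_i}x$ with probability~$1$ if $T_{r_i}x\in\mathcal{T}_n$; otherwise move to $T_{r_i}^{-1}x$ with probability~$\theta$ and hold with probability~$1-\theta$. There is no step in which $e_i$ is proposed, so the curl/loop scenarios you anticipate (``$r_ib=\ell^{\mp1}b$, $e_ib=xb$'') never arise as separate cases to be matched.

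\textbf{(2) The operator $\mathscr{T}_{e_i}$ is not left multiplication by $T_{e_i}$.} By the paper's definition, $\mathscr{T}_{e_i}(x)=T_{r_i}x$ when $T_{r_i}x\in\mathcal{T}_n$ and $\mathscr{T}_{e_i}(x)=T_{e_i}x$ only when $T_{r_i}x\notin\mathcal{T}_n$. This conditional definition is exactly what makes the theorem a two-line computation rather than a diagram case analysis: if $T_{r_i}x\in\mathcal{T}_n$ then both operators send $x$ to $T_{r_i}x$ and the convex combination is trivially $T_{r_i}x$; if $T_{r_i}x\notin\mathcal{T}_n$ then one expands $\theta T_{r_i}x$ via relation~(A7) with $m=(1-\theta)/\theta$, $\ell=1$, and the resulting $-(1-\theta)T_{e_i}x$ term is cancelled exactly by $(1-\theta)\mathscr{T}_{e_i}(x)=(1-\theta)T_{e_i}x$, leaving $\theta T_{r_i}^{-1}x+(1-\theta)x$. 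Under your reading of $\mathscr{T}_{e_i}$ as genuine left multiplication, the first case would produce an unwanted $(1-\theta)T_{e_i}x$ term with no mechanism to remove it, and no choice of parameters fixes this.

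So the ``defect cancellation'' you describe is real, but it is engineered into the \emph{definition} of $\mathscr{T}_{e_i}$, not achieved by tuning $\ell,z,x$ across many diagram cases. Once you use the paper's actual definitions and the specialization $m=(1-\theta)/\theta$, $\ell=1$, the proof collapses to the two cases above plus an appeal to Proposition~\ref{translate walk}.
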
 

The main tool used in the analysis in \cite{diaram} is Proposition 4.6, which translates the total variation norm into an inner product on the Iwahori-Hecke algebra $H$ arising from a trace on $H$. Plancherel's theorem then allows for bounds using the dimensions and characters of representations of $H$.

We extend the natural trace function on the Hecke algebra to the BMW algebra to provide an analogue of Proposition 4.6 (Theorem \ref{main2}). We develop a trace form $\langle,\rangle_{BMW}$ to study the walk, similarly enabling the use of tools from representation theory to analyze the time to stationarity of such walks. We consider submatrices $\hat{K}$ of $K_i$ with respect to a shifted basis. Let $\hat{\pi}$ denote the stationary distribution of $\hat{K}$.

\begin{theorem}\label{main2}
$$\| [\hat{K}^n/\hat{\pi}]_{x}-1\|^2_2 \leq \|[\hat{K}^n]_{x}-1\|_{BMW}^2.$$
\end{theorem}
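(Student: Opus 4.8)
The plan is to adapt the proof of Proposition~4.6 of \cite{diaram} from the Iwahori--Hecke algebra to the BMW algebra. The first step is bookkeeping: by Theorem~\ref{leftmult1} the chain $K_i$ is left multiplication by $L:=\theta\mathscr{T}_{r_i}+(1-\theta)\mathscr{T}_{e_i}$, so after passing to the shifted basis and restricting to the relevant block, $\hat K^n$ is the matrix, in the shifted basis $\{\hat T_y\}$, of left multiplication by $L^n$. Hence the $x$-row of $\hat K^n$ is the coordinate vector of the single algebra element $L^n\hat T_x$, and the object on the right-hand side, $[\hat K^n]_x-1$, is the coordinate vector of $L^n\hat T_x$ minus the algebra element whose coordinates form the stationary row; call this element $a_n\in\mathrm{BMW}$.

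The second step is to bring in the trace form. Extending the Markov trace from the Hecke algebra to the BMW algebra --- possible because both are defined through closures of tangle diagrams --- set $\langle a,b\rangle_{\mathrm{BMW}}=\Tr(a\,\bar b)$, where $\bar{\ }$ is the natural anti-involution reflecting a tangle diagram in a horizontal line. Then $\|a_n\|_{\mathrm{BMW}}^2=\langle a_n,a_n\rangle_{\mathrm{BMW}}=c^{\top}Gc$, where $c$ is the coordinate vector of $a_n$ and $G_{yy'}=\langle\hat T_y,\hat T_{y'}\rangle_{\mathrm{BMW}}$ is the Gram matrix of the shifted basis. On the other hand, writing out the $\chi^2$-distance, the left-hand side equals $\sum_y\hat\pi(y)^{-1}\bigl(\hat K^n(x,y)-\hat\pi(y)\bigr)^2=c^{\top}Dc$ with $D=\operatorname{diag}\bigl(\hat\pi(y)^{-1}\bigr)$, the shift of the basis being arranged precisely so that subtracting the stationary row corresponds to subtracting $1$, reconciling the two sides. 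So the theorem is equivalent to the inequality of positive quadratic forms $D\preceq G$.

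Establishing $D\preceq G$ is the heart of the argument, and it has two ingredients. That $G$ is positive semidefinite is automatic: $\Tr$ is a positive trace --- a nonnegative combination $\sum_\lambda w_\lambda\chi^\lambda$ of irreducible BMW-characters --- so $\langle a,a\rangle_{\mathrm{BMW}}=\sum_\lambda w_\lambda\|\rho_\lambda(a)\|^2_{\mathrm{HS}}\ge 0$, and the same identity supplies the Plancherel expansion of $\|a_n\|^2_{\mathrm{BMW}}$ in terms of dimensions, characters, and eigenvalues that one ultimately wants for mixing bounds. The real content is that the diagonal of $G$ already carries the probabilistic weights --- that is, $\hat\pi(y)$ is proportional to $\langle\hat T_y,\hat T_y\rangle_{\mathrm{BMW}}^{-1}$ --- and that the off-diagonal trace values $\Tr(\hat T_y\,\overline{\hat T_{y'}})$, which one computes by closing up the tangle $\hat T_y\,\overline{\hat T_{y'}}$ and applying the BMW skein relations, collecting powers of the loop parameter $\delta$ and the framing parameter, do not spoil the domination $G-D\succeq 0$. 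This is where the specific choice of the shift of the monoid basis enters, and it is also the mechanism turning the equality of \cite{diaram} into an inequality here: because the Brauer generators $e_i$ are non-invertible and create closed loops, the shifted basis is not exactly orthonormal under $\langle,\rangle_{\mathrm{BMW}}$, and the mismatch is precisely the nonnegative remainder $G-D$. I expect verifying this semidefinite domination --- together with checking that $\hat\pi$ matches the trace weights of the shifted basis elements --- to be the main obstacle; by contrast, identifying $\hat K^n$ with left multiplication by $L^n$ and deriving the Plancherel expansion are routine once the trace form is in place.
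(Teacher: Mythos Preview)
Your high-level picture---compare the $L^2(\hat\pi)$ quadratic form to the Gram quadratic form of $\langle\,,\,\rangle_{\mathrm{BMW}}$---is the right shape, but the proof you sketch does not match the paper's and, more importantly, rests on two assumptions that are false in this setting.

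First, the trace $\tau$ used here is \emph{not} the Markov (Kauffman) trace obtained by closing tangles and counting loops. It is the \emph{restricted trace} defined by $\tau(T_{\mathrm{id}})=1$ and $\tau(T_d)=0$ for $d\neq\mathrm{id}$; Proposition~\ref{oftenzero} shows $\tau(T_xT_y)$ vanishes unless $x=y^{-1}\in S_n$. In particular $\tau$ is degenerate on the full BMW algebra and is certainly not a positive combination $\sum_\lambda w_\lambda\chi^\lambda$ of characters with $w_\lambda\ge 0$. So your argument that ``$G$ is positive semidefinite because $\mathrm{Tr}$ is a positive trace'' and your invocation of skein relations, loop parameters, and anti-involutions $a\mapsto\bar a$ are all beside the point: none of that machinery is present.

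Second, the inequality is \emph{not} obtained by proving a matrix domination $D\preceq G$ valid for all vectors. The bilinear form $\langle a,b\rangle_{\mathrm{BMW}}=\tau(ab)$ is not even of the type $\tau(a\bar b)$, and its Gram matrix on $\hat{\mathbf X}_1\cup\hat{\mathbf X}_2\cup\hat{\mathcal S}$ is not symmetric in the usual sense---it pairs $\hat T_x$ with $\hat T_{x^*}$ and $\hat T_{s_x}$ with $\hat T_{s_x^{-1}}$ (Lemma~\ref{innerprodeval}). What the paper actually does is compute that Gram matrix explicitly from the basis shift, and then prove an \emph{identity} (Lemma~\ref{translateinnerprod}):
\[
\langle f/\hat\pi, g/\hat\pi\rangle_2
=\langle f, g^{*}\rangle_{\mathrm{BMW}}
-\sum_{\hat T_{s_x}\in\hat{\mathcal S}}
\frac{f(x)g(s_x^{-1})+f(s_x^{-1})g(x^{*})}{\hat\pi_x(x)^{1/2}}.
\]
The theorem follows by specializing $f=g=[\hat K^n]_x$ and observing that the subtracted sum is nonnegative because the entries of $\hat K^n$ and of $\hat\pi$ are nonnegative. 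So the inequality holds only for the specific vector $[\hat K^n]_x$, using the sign of its entries; it is not a general semidefinite comparison.

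The genuine content you are missing is therefore: (i) the correct trace and its vanishing pattern (Proposition~\ref{oftenzero}); (ii) the explicit Gram values of the shifted basis (Lemma~\ref{innerprodeval}), which is exactly why the shift $\hat T_x=T_x+\pi_x(x)^{-1/2}T_{s_x}$ is chosen; and (iii) the identity of Lemma~\ref{translateinnerprod}, whose correction term has the right sign by Markovianity. Your Plancherel step and the identification of $\hat K^n$ with left multiplication are fine intuitions, but they are downstream of (i)--(iii), not substitutes for them.
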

Thus, studying the time to stationarity of $\hat{K}$ can be achieved by studying $\|[\hat{K}^n]_{x}-1\|_{BMW}^2$. This opens up representation theoretic tools---in particular the dimensions and traces of representations of the BMW algebra---for studying the random walk. 

We begin in Sections \ref{probprelim} and \ref{prelim2} with the preliminaries needed from the probability theory and the representation theory of semisimple algebras. We also give a presentation of the Brauer and BMW algebras. In Section \ref{thewalk} we describe the random walk arising from the Metropolis algorithm, and prove Theorem \ref{leftmult1}. We continue in Section \ref{walkanalysis} with analysis of the walk, recasting it in terms of a translated basis, constructing a trace form to bound the time to stationarity, and proving Theorem \ref{main2}.

\section{Preliminaries: Probability Theory}\label{probprelim}

Background on Markov chains can be found in many standard probability texts (see eg \cite{feller}). The book of Levin, Peres, and Wilmer \cite{levinetall} gives a particularly thorough introduction to Markov chains, including classification of states and the Metropolis algorithm, while \cite{diaram} gives a concise introduction to the probabilistic background needed. We will follow the notation and outline of \cite{diaram}.

\subsection{Markov Chains}\label{L2norm}
A finite Markov chain with state space $X$ is a process that moves among states in $X$ such that the conditional probability of moving from state $x$ to state $y$ is independent of the preceding sequence of states. More formally:

\begin{definition}
A \textbf{Markov chain} on a finite set $X$ is a matrix $K=(K(x,y))_{x,y\in X}$ such that $K(x,y)\in[0,1]$ and for all $x\in X$, $$\sum_{y\in X} K(x,y)=1.$$ We call $X$ the \textbf{state space}.
\end{definition}
Note that $K(x,y)$ gives the probability of moving from $x$ to $y$ in one step, while $K^m(x,y)$ gives the probability of moving from $x$ to $y$ in $m$ steps. 
\begin{definition} A Markov chain $K$ is \textbf{irreducible} if for each $x,y\in X$, there exists an integer $m$ such that $K^m(x,y)>0$. Let $T(x)$ denote the minimum $t$ such that $K^t(x,x)>0$. Then $K$ is \textbf{aperiodic} if $$\gcd_x(T(x))=1.$$ 
\end{definition}
Note that if $K$ is irreducible and aperiodic, there exists an integer $r$ such that $K^r(x,y)>0$ for all $x,y\in X$ \cite[Proposition 1.7]{levinetall}. 
\begin{definition} A Markov chain is \textbf{reversible} if there exists a probability distribution $\pi:X\rightarrow [0,1]$ such that for all $x,y\in X$,
$$\pi(x)K(x,y)=\pi(y)K(y,x).$$ We call $\pi$ the \textbf{stationary distribution} of $K$.
\end{definition}

An irreducible, aperiodic, reversible Markov chain $K$ converges to its stationary distribution: 
$$\lim_{m\rightarrow\infty} K^m(x,y)=\pi(y).$$ 

The Metropolis construction introduced in Section \ref{met} produces a reversible Markov chain with a chosen stationary distribution. Our interest is in the time to stationarity of such chains. 

\begin{definition}
Let $K^m_x$ denote the probability distribution $K^m(x,\cdot)$. The \textbf{total variation distance} from $K^m_x$ to $\pi$ is 
$$\vert K^m_x-\pi\vert_{TV}:=\max_{A\subseteq X}\vert \sum_{y\in A} K^m(x,y)-\pi(y)\vert.$$
\end{definition}
For $L^2(\pi)$ the space of functions $f:X\rightarrow\mathbb{R}$, equipped with the inner product
$$\langle f,g\rangle_2=\sum f(x)g(x)\pi(x),$$ the total variation distance is bounded by the $L^2(\pi)$ norm:

\begin{lemma}\label{tvl2}\cite[Lemma 2.3]{diaram}
For $f\in L^2(\pi)$, $$|f|_{TV}^2\leq \frac{1}{4}\|f/\pi\|_2^2,$$ where $f/\pi(x)=0$ if $\pi(x)=0$.
\end{lemma}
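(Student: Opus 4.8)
The plan is to pass from the combinatorial total variation quantity to an $\ell^1$-type sum and then apply the Cauchy--Schwarz inequality with the weights $\pi(y)$. Throughout I read $|f|_{TV}=\max_{A\subseteq X}\bigl|\sum_{y\in A}f(y)\bigr|$, extending to an arbitrary $f$ the definition given for $K^m_x-\pi$, and I use the fact that in every application $f$ has mean zero, $\sum_{y\in X}f(y)=0$; for $f=K^m_x-\pi$ this holds because both $K^m(x,\cdot)$ and $\pi$ are probability distributions. This mean-zero hypothesis is exactly what makes the constant $\tfrac14$ attainable—without it the inequality can fail (e.g.\ for $f$ concentrated at a single point)—so the first task is to exploit it to rewrite $|f|_{TV}$ in closed form.

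First I would identify the maximizing set. Writing $P=\sum_{y:\,f(y)>0}f(y)$ and $N=\sum_{y:\,f(y)<0}|f(y)|$, the set function $A\mapsto\sum_{y\in A}f(y)$ is maximized over subsets by $A^+=\{y:f(y)>0\}$ and minimized by $A^-=\{y:f(y)<0\}$, so that $|f|_{TV}=\max(P,N)$. The mean-zero condition gives $P-N=\sum_{y}f(y)=0$, hence $P=N$ and
$$|f|_{TV}=P=\tfrac12(P+N)=\tfrac12\sum_{y\in X}|f(y)|.$$
Thus the total variation distance equals one half of the $\ell^1$ norm of $f$ with respect to counting measure.

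Next I would bound this $\ell^1$ norm by the weighted $L^2(\pi)$ norm. Applying Cauchy--Schwarz to the factorization $|f(y)|=\bigl(|f(y)|/\sqrt{\pi(y)}\bigr)\sqrt{\pi(y)}$ yields
$$\sum_{y\in X}|f(y)|\le\Bigl(\sum_{y\in X}\frac{f(y)^2}{\pi(y)}\Bigr)^{1/2}\Bigl(\sum_{y\in X}\pi(y)\Bigr)^{1/2}=\|f/\pi\|_2,$$
since $\sum_{y}\pi(y)=1$ and $\|f/\pi\|_2^2=\sum_{y}f(y)^2/\pi(y)$. Combining this with the previous display and squaring gives $|f|_{TV}^2\le\tfrac14\|f/\pi\|_2^2$, as required. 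The only point demanding care is the convention $f/\pi(x)=0$ when $\pi(x)=0$: the Cauchy--Schwarz step is legitimate precisely when $f$ is supported on $\{y:\pi(y)>0\}$, so that every $y$ contributing to $\sum_{y}|f(y)|$ also contributes to $\sum_{y}f(y)^2/\pi(y)$. This holds in the intended applications, where $\pi$ is the stationary distribution of an irreducible chain and hence strictly positive, so there is no genuine obstacle; the real content of the argument is the exact evaluation of $|f|_{TV}$ in the first step.
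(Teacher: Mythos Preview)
Your argument is correct and is exactly the standard route: reduce $|f|_{TV}$ to $\tfrac12\sum_y|f(y)|$ via the mean-zero condition, then apply Cauchy--Schwarz with weights $\pi(y)$. The paper itself supplies no proof of this lemma---it is simply quoted from \cite[Lemma~2.3]{diaram}---so there is nothing to compare against beyond noting that your proof is the expected one. Your observation that the inequality as literally stated (for arbitrary $f\in L^2(\pi)$) requires the mean-zero hypothesis, and that this hypothesis is automatic in every use the paper makes of the lemma, is a genuine clarification of a point the statement leaves implicit.
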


\subsection{The Metropolis Algorithm}\label{met}

Given a symmetric Markov chain $P$ and a probability distribution $\pi$, the Metropolis algorithm modifies $P$ to produce a reversible Markov chain $M$ with stationary distribution $\pi$:

$$M(x,y)=\left\{
\begin{array}{ll} 
\displaystyle P(x,y)& \text{if } x\neq y \text{ and } \pi(y)\geq \pi(x),\\
\displaystyle P(x,y)\frac{\pi(y)}{\pi(x)}& \text{if } x\neq y \text{ and } \pi(y)<\pi(x),\\
\displaystyle P(x,x)+\sum_{\pi(z)<\pi(x)} P(x,z)\left(1-\frac{\pi(z)}{\pi(x)}\right)& \text{if } x=y.
\end{array}\right.$$

%As an example, in \cite{diaram}, Diaconis and Ram consider the Markov chain that arises from applying the Metropolis algorithm to the usual random walk on the symmetric group based on a generating set. We consider this chain in further detail in Section \ref{thewalk}.

While $M(x,y)$ is reversible with stationary distribution $\pi$, irreducibility and aperiodicity are not guaranteed. In particular, the Markov chains we consider in Section \ref{thewalk} are aperiodic but not irreducible. To analyze these chains we consider their \textit{closed communication classes}.

\begin{definition}
Let $K$ be a Markov chain with state space $X$. For $x,y\in X$, $y$ \textbf{is accessible} from $x$, denoted $x\rightarrow y$, if $x$ can reach $y$ in finitely many steps. We say $x$ \textbf{communicates with} $y$, denoted $x\leftrightarrow y$, if $x\rightarrow y$ and $y\rightarrow x$. The equivalence classes under the relation $\leftrightarrow $ are the \textbf{communication classes} of $K$. A communication class $C$ is \textbf{closed} if for $x\in C$ and for all $y\notin C$, $y$ is not accessible from $x$. 
\end{definition}
Note that studying the time to stationarity of a reversible, aperiodic Markov chain $K$ reduces to studying the time to stationarity of the closed communication classes of $K$.

\subsection{Systematic Scans}
The Metropolis algorithm, in the context of generating elements of a group, provides systematic and random scanning strategies. For example, for each generator $r_i=(i\;i+1)$ of $S_n$, let
$$   P_i(x,y) = \left\{
     \begin{array}{ll}
       1 &  \text{if}\;y=r_ix,\\
       0 &  \text{else}.
     \end{array}
   \right.
$$
Then for $l_{S}$ the length function on words in $S_n$, let $\pi$ be the probability distribution $$\pi(x)=\frac{\theta^{-l_{S}(x)}}{\displaystyle\sum_{w\in S_n} \theta^{-l_{S}(w)}}.$$ 
The Metropolis algorithm construction then produces Markov chains $M_1,M_2,\dots,$ $M_{n-1}$ corresponding to multiplication by the generators $r_1,\cdots, r_{n-1}$. For an explicit description see Section \ref{thewalk}. 

A choice of infinite sequence $\{i_l\}_{l=1}^\infty$ gives a scanning strategy: $$\cdots M_{i_l}M_{i_{l-1}}\cdots M_{i_1}.$$ For $M_i$ reversible, each with stationary distribution $\pi$, the following systematic scans produce reversible Markov chains with stationary distribution $\pi$ (see, eg \cite{diaram}):
$$\begin{array}{ll}
      \displaystyle \frac{1}{n-1}\sum_{i=1}^{n-1} M_i & \text{(random scan)},\\
       M_1M_2\cdots M_{n-1}M_{n-1}\cdots M_2M_1 & \text{(short systematic scan)},\\
       (M_1\cdots M_{n-1}M_{n-1}\cdots M_1)\cdots (M_1M_2M_2M_1)(M_1M_1) & \text{(long systematic scan)}.\\
     \end{array}
$$ 

While such scanning strategies may seem intuitive for sampling from $\pi$, they have proven difficult to analyze in many situations. In the context of generation of Coxeter group elements,  Diaconis and Ram \cite{diaram} show that convergence of the short systematic scan for the distribution $\pi$ above,  with $l_S$ replaced by the length function on the Coxeter group coming from writing words as a product of simple reflections, occurs in the same number of steps as that of a random scan, i.e., choosing a random sequence of indices $\{i_\ell\}_{\ell=1}^\infty$. However, results for different scanning techniques or probability distributions remain open. In the context of graph colorings, Dyer et al. compare systematic scans with random scans for sampling proper $q$-colorings of paths for $q\geq 4$, in which a vertex is assigned a new color $c$ only if none of its neighbors are colored by $c$ \cite{dyeretall}. However, results for more general graphs have resisted analysis. 

Fishman \cite{fishman} gives an overview of scanning strategies, while Diaconis and Saloff-Coste's survey \cite{diacoste} provides further applications of the Metropolis algorithm.

\section{Preliminaries: Semisimple Algebras}\label{prelim2}
\subsection{Fourier Inversion and Plancherel}\label{planchinv}
Random walks on groups are frequently studied using Fourier analysis. For example, for a group $G$ and a function $Q:G\rightarrow \mathbb{C}$, let $\hat{Q}$ denote the Fourier transform of $Q$.  
\begin{theorem}[Diaconis, \cite{diaconis}]\label{diacthm} For $G$ a group, $Q$ a probability distribution on $G$, and $U$ the uniform distribution on $G$, 
$$\vert Q-U\vert_{TV}^2\leq\frac{1}{4}\sum_\rho d_\rho \Tr(\hat{Q}(\rho)\hat{Q}(\rho)^*),$$
where $*$ denotes conjugate transpose and the sum is over all nontrivial irreducible representations $\rho$ of $G$.
\end{theorem}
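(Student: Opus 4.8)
The plan is to reduce the total variation distance to an $L^2$ distance on $G$ via Cauchy--Schwarz, and then convert that $L^2$ norm into the spectral side using the Plancherel theorem for finite groups, exploiting that the Fourier transform of the uniform distribution $U$ is supported on the trivial representation.

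First I would recall that for two probability measures the total variation distance satisfies $\vert Q-U\vert_{TV}=\tfrac12\sum_{g\in G}\vert Q(g)-U(g)\vert$. Pairing $\vert Q(g)-U(g)\vert$ with the constant function $1$ and applying the Cauchy--Schwarz inequality gives
$$\left(\sum_{g\in G}\vert Q(g)-U(g)\vert\right)^2\le \vert G\vert\sum_{g\in G}\vert Q(g)-U(g)\vert^2,$$
so that $\vert Q-U\vert_{TV}^2\le \tfrac14\,\vert G\vert\,\lVert Q-U\rVert_2^2$, where $\lVert f\rVert_2^2=\sum_{g\in G}\vert f(g)\vert^2$.

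Next I would invoke the Plancherel formula: for $f\colon G\to\mathbb{C}$ with $\hat f(\rho)=\sum_{g\in G}f(g)\rho(g)$,
$$\sum_{g\in G}\vert f(g)\vert^2=\frac{1}{\vert G\vert}\sum_{\rho} d_\rho\,\Tr\!\big(\hat f(\rho)\hat f(\rho)^*\big),$$
the sum running over all irreducible representations $\rho$ of $G$. Applying this with $f=Q-U$: since $\sum_{g\in G}\rho(g)=0$ for every nontrivial irreducible $\rho$ (the orthogonality relations, equivalently the multiplicity of the trivial representation in the regular representation is $1$) and $\sum_{g\in G}Q(g)=1$ because $Q$ is a probability distribution, one gets $\widehat{Q-U}(\rho)=\hat Q(\rho)$ for $\rho$ nontrivial and $\widehat{Q-U}=0$ on the trivial representation. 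Hence the Plancherel sum for $Q-U$ ranges only over the nontrivial $\rho$, giving
$$\lVert Q-U\rVert_2^2=\frac{1}{\vert G\vert}\sum_{\rho}{}^{\,\prime}\; d_\rho\,\Tr\!\big(\hat Q(\rho)\hat Q(\rho)^*\big),$$
where $\sum'$ is the sum over nontrivial irreducibles. Substituting into the bound from the previous step, the factors of $\vert G\vert$ cancel and the claim follows.

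There is no genuine obstacle here; the only point requiring care is bookkeeping with normalization conventions --- one must use the same convention for $\hat f$ in the statement and in Plancherel --- and the identification of the trivial-representation contribution as the one that is removed by centering $Q$ against $U$. Everything else is a direct computation.
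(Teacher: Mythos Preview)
Your argument is correct and is exactly the standard proof of this inequality: Cauchy--Schwarz to pass from total variation to an $\ell^2$ norm, then Plancherel to pass to the Fourier side, with the trivial representation dropping out because $Q$ and $U$ have the same mass. The paper itself does not prove this statement; it merely quotes it as a known result from Diaconis's monograph, so there is no in-paper proof to compare against. Your write-up matches the original source's argument.
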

The Fourier transform of a complex valued function on a finite group arises as a special case of Fourier transforms on semisimple algebras. Here we review the basic concepts and definitions. For more background on the representation theory of semisimple algebras see \cite{ram}. 

\begin{definition} A \textbf{matrix representation} of a $\mathbb{C}$-algebra $A$ is an algebra homomorphism $$\rho: A\rightarrow M_d(\mathbb{C}),$$ where $M_{d}(\mathbb{C})$ denotes the complex algebra of $d\times d$ matrices with entries in $\mathbb{C}$. We call $d$ the \textbf{dimension} of $\rho$.

An algebra $A$ is \textbf{simple} if $A\cong M_n(\mathbb{C})$ for some $n\geq 1$ and \textbf{semisimple} if it decomposes as a direct sum of simple algebras: $$A\cong \bigoplus_{\lambda\in \Lambda}M_{\lambda}(\mathbb{C}),$$ for a finite index set $\Lambda$. 
\end{definition}

\begin{definition}\label{alg} 
Let $A$ be a semisimple algebra, $\{a_i\}_{i\in I}$ a basis for $A$ and $\displaystyle f=\sum_{i\in I}f(a_i)a_i\in A$.
\begin{itemize}
\item[(i)] Let $\rho$ be a matrix representation of $A$. Then the \textbf{Fourier transform of} $f$ \textbf{at} $\rho$, denoted $\hat{f}(\rho)$, is the matrix sum
$$\hat{f}(\rho)=\sum_{i\in I} f(a_i)\rho(a_i).$$

\end{itemize}
\end{definition}

\begin{definition} For $A$ a semisimple algebra, a \textbf{trace} function on $A$ is a $\mathbb{C}$-linear function $\tau:A\rightarrow\mathbb{C}$ such that for all $a,b\in A$,
$$\tau(ab)=\tau(ba).$$
\end{definition}

Note by linearity that the usual trace function on $M_d(\mathbb{C})$ is unique up to multiplication by a constant. Hence, for any trace $\tau$ on $A$ and set $R$ of inequivalent irreducible representations of $A$, there exist constants $t_\rho\in\mathbb{C}$ such that:
$$\tau=\sum_{\rho\in R}t_\rho T_\rho,$$
where for $a\in A$, $T_\rho(a)=\Tr(\rho(a))$.

A trace function $\tau$ gives rise to a symmetric bilinear form $\langle \cdot,\cdot\rangle_\tau:A\times A\rightarrow \mathbb{C}$ by letting $$\langle a,b\rangle_\tau=\tau(ab),$$
for $a,b\in A$.

Both Theorem \ref{diacthm} and the results of \cite{diaram} require the notion of Fourier inversion and Plancherel's Theorem.
\begin{theorem}[Fourier Inversion, Plancherel]\label{planch}
Let $A$ be a semisimple algebra with basis $\{a_i\}$ and $\tau$ a nondegenerate trace on $A$.  Let $\{a_i^*\}$ be the dual basis to $\{a_i\}$ with respect to the trace form $\langle \cdot,\cdot\rangle_\tau$. Then for $f, f_1, f_2$ complex-valued functions on $A$,
\begin{equation}
f(a_i)=\sum_{\rho} t_\rho \Tr(\hat{f}(\rho)\rho(a_i^*)),
\end{equation}

\begin{equation}
\langle f_1, f_2\rangle_\tau=\sum_{\rho} t_\rho \Tr(\hat{f_1}(\rho)\hat{f_2}(\rho)).
\end{equation}

\end{theorem}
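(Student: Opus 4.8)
The plan is to reduce both identities to the single observation that the decomposition $\tau=\sum_{\rho\in R}t_\rho T_\rho$ recorded above, together with the fact that each matrix representation $\rho$ is an algebra homomorphism, lets one collapse a trace of a product of Fourier transforms into a value of $\tau$. Throughout I identify a function $f$ with the element $\sum_{i\in I}f(a_i)a_i\in A$, so that $\hat f(\rho)=\sum_i f(a_i)\rho(a_i)=\rho\!\left(\sum_i f(a_i)a_i\right)$. Then for any $b\in A$,
$$\Tr\!\big(\hat f(\rho)\,\rho(b)\big)=\Tr\!\Big(\rho\big(\textstyle\sum_i f(a_i)a_i\,b\big)\Big)=\sum_{i}f(a_i)\,T_\rho(a_ib),$$
and multiplying by $t_\rho$ and summing over $R$ gives $\sum_\rho t_\rho\Tr(\hat f(\rho)\rho(b))=\sum_i f(a_i)\,\tau(a_ib)=\sum_i f(a_i)\,\langle a_i,b\rangle_\tau$.

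For Fourier inversion I would specialize $b=a_i^*$: the last expression becomes $\sum_j f(a_j)\langle a_j,a_i^*\rangle_\tau$, and since $\langle\cdot,\cdot\rangle_\tau$ is symmetric (as $\tau(ab)=\tau(ba)$) and $\{a_i^*\}$ is the dual basis of $\{a_i\}$ for this form, $\langle a_j,a_i^*\rangle_\tau=\delta_{ij}$, leaving exactly $f(a_i)$. For the Plancherel identity I would instead write, again using that $\rho$ is a homomorphism,
$$\sum_\rho t_\rho\Tr\!\big(\hat f_1(\rho)\hat f_2(\rho)\big)=\sum_{j,k}f_1(a_j)f_2(a_k)\sum_\rho t_\rho T_\rho(a_ja_k)=\sum_{j,k}f_1(a_j)f_2(a_k)\,\tau(a_ja_k),$$
and the last expression is $\tau\!\big(\sum_j f_1(a_j)a_j\cdot\sum_k f_2(a_k)a_k\big)=\langle f_1,f_2\rangle_\tau$ by bilinearity of $\tau$ and the definition of the trace form on the elements of $A$ attached to $f_1$ and $f_2$.

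The only input that deserves genuine care is the decomposition $\tau=\sum_{\rho\in R}t_\rho T_\rho$ with every $t_\rho\neq 0$, which is what guarantees that the dual basis $\{a_i^*\}$ exists and that the manipulations above are legitimate; this is the step I would write out in full. Using $A\cong\bigoplus_{\lambda\in\Lambda}M_\lambda(\mathbb{C})$ one checks that distinct simple blocks are orthogonal under $\langle\cdot,\cdot\rangle_\tau$, that on the $\lambda$-block the form is a scalar multiple $t_\lambda$ of the nondegenerate pairing $(x,y)\mapsto\Tr(xy)$, and that nondegeneracy of $\tau$ therefore forces each $t_\lambda\neq 0$; uniqueness of the trace on a matrix algebra up to a scalar then identifies $\tau$ with $\sum_\lambda t_\lambda T_\lambda$. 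Past that there is no real obstacle --- the argument is bookkeeping, and the one thing to keep straight is that the pairing here is bilinear rather than Hermitian, so that (unlike in Theorem \ref{diacthm}) no complex conjugates appear.
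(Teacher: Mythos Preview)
The paper states this theorem as background without giving a proof, so there is nothing to compare against; your argument is the standard one and it is correct. The one subtlety you rightly flag---that nondegeneracy of $\tau$ forces every weight $t_\rho$ to be nonzero, hence that the form $\langle\cdot,\cdot\rangle_\tau$ is nondegenerate and the dual basis $\{a_i^*\}$ actually exists---is exactly the point that needs care, and your block-by-block verification via $A\cong\bigoplus_\lambda M_\lambda(\mathbb{C})$ handles it cleanly.
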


\subsection{The Brauer Algebra}\label{brauerdefs2}
%An element in the symmetric group $S_n$ can be realized as a diagram on $2n$ points, consisting of two rows of $n$ points each, with each point in the top row connected by an edge to one in the bottom row. Multiplication in the symmetric group is realized as concatenation of diagrams. The simple transpositions $\{r_i=(i\;i+1)\mid 1\leq i\leq n-1\}$ form a generating set for the symmetric group, allowing us to define a natural length function $l_S:S_n\rightarrow\mathbb{N}$ on the symmetric group: for $w\in S_n$, let $l_S(w)$ be the minimum number of generators needed to express $w$.

Elements of the \textbf{Brauer monoid}, $Br_n$, are realized as generalized symmetric group diagrams: consider diagrams on $2$ rows of $n$ points each, with edges connecting pairs of points regardless of row and each point part of exactly one edge. Multiplication is realized as concatenation of diagrams. Note that in some cases, concatenation introduces a closed loop. For a parameter $q$ and two diagrams $x,y\in Br_n$, let $c$ denote the number of closed loops in the multiplication $xy$ and let $z$ be the diagram of this product with the closed loops removed. Then $xy=q^c z$.  

\begin{figure}[H] 
\begin{center}
\begin{tikzpicture}
   \node[pnt] at (-3,0) (v_1) {};
   \node[pnt] (v_2) [right of=v_1] {};
   \node[pnt] (v_3) [right of=v_2] {};
   \node[pnt] (v_4) [right of=v_3] {};
   \node[pnt] (v_5) [below of=v_1] {};
   \node[pnt] (v_6) [below of=v_2] {};
   \node[pnt] (v_7) [below of=v_3] {};
   \node[pnt] (v_8) [below of=v_4] {};
   \node[pnt] (w_5) [below of=v_5] {};
   \node[pnt] (w_6) [below of=v_6] {};
   \node[pnt] (w_7) [below of=v_7] {};
   \node[pnt] (w_8) [below of=v_8] {};
   \node (q) [right of=v_8] {=};
   \node at (-3.5,-.5) (x) {$x$};
   \node at (-3.5,-1.5) (x) {$y$};
   %\node (q) at (-4,-0.5) {x};
   %\node (q) at (-4,-1.5) {y};
   \node (q) at (1.5,-1) {$q$};
   \node[pnt] at (2,-.5) (u_1) {};
   \node[pnt] (u_2) [right of=u_1] {};
   \node[pnt] (u_3) [right of=u_2] {};
   \node[pnt] (u_4) [right of=u_3] {};
   \node[pnt] (u_5) [below of=u_1] {};
   \node[pnt] (u_6) [below of=u_2] {};
   \node[pnt] (u_7) [below of=u_3] {};
   \node[pnt] (u_8) [below of=u_4] {};
   \node at (5.5,-1) (x) {$z$};
   \path[-]
    (v_1) edge [bend right=25] node {} (v_4)
    (v_2) edge  node {} (v_8)
    (v_3) edge  node {} (v_6)
    (v_7) edge [bend right=25] node {} (v_5)
    (v_7) edge [bend left=25] node {} (v_5)
    (v_6) edge node {} (w_8)
    (w_6) edge [bend right=25]   node {} (w_5)
    (v_8) edge node {} (w_7)   
    (u_1) edge [bend right=25]  node {} (u_4)
    (u_2) edge  node {} (u_7)
    (u_3) edge  node {} (u_8)
    (u_5) edge [bend left=25]  node {} (u_6);    

\end{tikzpicture}
\caption{ $xy=q^1z$}\label{brauers1}
\end{center}
\end{figure}

Two Brauer diagrams $d_1$ and $d_2$ are equivalent if they differ only in the number of closed loops, i.e., if when $q=1$, $d_1=d_2$. For example, for $x,y,z$ as in Figure \ref{brauers1}, the product $xy$ is equivalent to $z$. The Brauer monoid, $Br_n$ consists of the set of equivalence classes of such diagrams and is generated by $\{r_i, e_i \mid 1\leq i\leq n-1\}$ (see Figure \ref{figg}). The symmetric group $S_n$, generated by the transpositions $\{r_i\mid 1\leq i \leq n-1\}$, sits inside of $Br_n$. As in the symmetric group, a natural length function $l_{Br}:Br_n\longrightarrow\mathbb{N}$ exists for the Brauer monoid: for $w\in Br_n$, define $l_{Br}(w)$ to be the minimum number of generators ($\{r_i,e_i\}$) needed to express $w$.

\begin{figure}[H] 
\begin{center}
\begin{tikzpicture}
   \node[pnt] at (-3,0) (v_1) {};
   \node[pnt] (v_2) [right of=v_1] {};
   \node[pnt] (v_3) [right of=v_2] {};
   \node[pnt] (v_4) [right of=v_3] {};
   \node[pnt] (v_5) [right of=v_4] {};
   \node[pnt] (v_6) [right of=v_5] {};
   \node at (-2.5,-.5) {$\dots$};
   \node at (1.5,-.5) {$\dots$};
   \node at (-1,.5) {\small$i$};
   \node at (0,.5) {\small$i+1$};
   \node at (-.5,-2) {\large${r_i}$};
   \node[pnt] (v_7) [below of=v_1] {};
   \node[pnt] (v_8) [below of=v_2] {};
   \node[pnt] (v_9) [below of=v_3] {};
   \node[pnt] (v_10) [below of=v_4] {};
   \node[pnt] (v_11) [below of=v_5] {};
   \node[pnt] (v_12) [below of=v_6] {};
   \node[pnt] at (4,0) (w_1) {};
   \node[pnt] (w_2) [right of=w_1] {};
   \node[pnt] (w_3) [right of=w_2] {};
   \node[pnt] (w_4) [right of=w_3] {};
   \node at (4.5,-.5) {$\dots$};
   \node at (8.5,-.5) {$\dots$};
   \node at (6,.5) {\small$i$};
   \node at (7,.5) {\small$i+1$};
   \node at (6.5,-2) {\large${e_i}$};
   \node[pnt] (w_5) [right of=w_4] {};
   \node[pnt] (w_6) [right of=w_5] {};
   \node[pnt] (w_7) [below of=w_1] {};
   \node[pnt] (w_8) [below of=w_2] {};
   \node[pnt] (w_9) [below of=w_3] {};
   \node[pnt] (w_10) [below of=w_4] {};
   \node[pnt] (w_11) [below of=w_5] {};
   \node[pnt] (w_12) [below of=w_6] {};
   \path[-]
    (v_1) edge node {} (v_7)
    (v_2) edge  node {} (v_8)
    (v_3) edge  node {} (v_10)
    (v_5) edge  node {} (v_11)
    (v_6) edge node {} (v_12)
    (v_4) edge node {} (v_9)
    (w_1) edge node {} (w_7)
    (w_2) edge  node {} (w_8)
    (w_3) edge [bend right=50] node {} (w_4)
    (w_10) edge [bend right=50]  node {} (w_9)
    (w_5) edge  node {} (w_11)
    (w_6) edge node {} (w_12);
\end{tikzpicture}
\caption{$r_i, e_i\in Br_n$}\label{figg}
\end{center}
\end{figure}

The Brauer algebra, $\mathcal{B}r_n$, is the $\mathbb{C}(q)$-algebra with basis $Br_n$.  Equivalently (see, for example \cite{BenkhartShaderRam}), $\mathcal{B}r_n$ has algebraic presentation given by generating set $$\{r_i, e_i \mid 1\leq i\leq n-1\},$$ along with relations:

$$\begin{array}{llll}
(B1)& r_i^2=1, &(B2)& r_ir_j=r_jr_i, \;\;\;\;r_ie_j=e_jr_i,\;\;\;\; e_ie_j=e_je_i,\;\; \\
&&&|i-j|>1\\
 (B3)& e_i^2=qe_i, &(B4)& e_ir_i=r_ie_i=e_i, \\
 (B5)&r_ir_{i+1}r_i=r_{i+1}r_ir_{i+1},& (B6)&e_ie_{i+1}e_i=e_i,\;\;\;\; e_{i+1}e_ie_{i+1}=e_{i+1},\\
(B7)& r_ie_{i+1}e_i=r_{i+1}e_i, & (B8)& e_{i+1}e_ir_{i+1}=e_{i+1}r_i.\\
 \end{array}$$

\subsection{The BMW Algebra}\label{BMWdefs}
Elements of the BMW monoid are realized as generalized Brauer diagrams called \textbf{tangles}. A tangle is again a diagram on $2$ rows of $n$ points each with edges connecting pairs of points regardless of row and each point part of exactly one edge. At each crossing of two edges we distinguish which edge passes above and which passes below (see Figure \ref{BMW}). As in the Brauer monoid, multiplication is concatenation of diagrams and two tangles are  equivalent if they differ only in their number of closed loops.

\begin{figure}[H] 
\begin{center}
\begin{tikzpicture}
   \node[pnt] at (-3,0) (v_1) {};
   \node[pnt] (v_2) [right of=v_1] {};
   \node[pnt] (v_3) [right of=v_2] {};
   \node[pnt] (v_4) [right of=v_3] {};
   \node[pnt] (v_5) [below of=v_1] {};
   \node[pnt] (v_6) [below of=v_2] {};
   \node[pnt] (v_7) [below of=v_3] {};
   \node[pnt] (v_8) [below of=v_4] {};
   \node (v) at (-.7,-.3) {};
   \node (w) at (-.65,-.35) {};
   \node (u) at (-1.6,-.8) {};
   \node (vv) at (-1.65,-.8) {};
     \path[-]
    (v_1) edge [bend right=25] node {} (v_2)
    (u) edge  node {} (v_6)
    (v) edge  node {} (v_8)
    (w) edge  node {} (v_3)
    (vv) edge  node {} (v_4)
    (v_7) edge [bend right=25] node {} (v_5);
\end{tikzpicture}
\caption{A Tangle}\label{BMW}
\end{center}
\end{figure}

Further, two tangles are equivalent if they are related by a sequence of Reidemeister moves of type II and III: 

\begin{figure}[H] 
\begin{center}
\begin{tikzpicture}
   \node at (-3,0) (v_1) {};
   \node at (-4,-.5)(r2) {$R_{II}$:};
   \node at (-4,-2.5)(r3) {$R_{III}$:};
  \node (v_2) [right of=v_1] {};
   \node (v_5) at (-3,-1) {};
   \node (q) at (-1,-.5) {$\longleftrightarrow$};
   \node (q) at (-1,-2.5) {$\longleftrightarrow$};
   \node (v_6) [right of=v_5] {};
   \node at (0,0) (w_1) {};
   \node at (.3,-.65) (w) {};
   \node at (.7,-.65) (v) {};
   \node at (.25,-.7) (w1) {};
   \node at (.8,-.7) (w2) {};
   \node (w_2) [right of=w_1] {};
   \node (w_5) at (0,-1) {};
   \node (w_6) [right of=w_5] {};
   \node (u_1) [below of=v_5] {};
   \node (u_3) [right of=u_1] {};
   \node (u_2) at (-2.5,-2) {};
    \node (u_4) [below of=u_1] {};
    \node (u_5) [below of=u_2] {};
    \node (u_6) [below of=u_3] {};
   \node (d_1) [below of=w_5] {};
   \node (d_3) [right of=d_1] {};
   \node (d_2) at (.5,-2) {};
    \node (d_4) [below of=d_1] {};
    \node (d_5) [below of=d_2] {};
    \node (d_6) [below of=d_3] {};
   \node at (-2.65,-2.4) (u1) {};
   \node at (-2.68,-2.35) (u2) {};
   \node at (-2.55,-2.5) (u3) {};
   \node at (-2.65,-2.6) (u4) {};
   \node at (-2.75,-2.75) (u5) {};
   \node at (.55,-2.4) (d1) {};
   \node at (.65,-2.35) (d2) {};
   \node at (.75,-2.2) (d3) {};
   \node at (.4,-2.57) (d4) {};
   \node at (.8,-2.8) (d5) {};
   \node at (.65,-2.65) (d6) {};
     \path[-]
    (v_1) edge [bend right=99] node {} (v_2)
    (v_6) edge [bend right=99] node {} (v_5)
    (w_1) edge [bend right=10]  node {} (w)
    (w_2) edge [bend left=10]  node {} (v)
    (w1) edge  [bend right=10] node {} (w2)
    (w_6) edge [bend right=100] node {} (w_5)
    (u_1) edge node {} (u1)
    (u3) edge node {} (u_6)
    (u3) edge node {} (u2)
    (u4) edge node {} (u_4)
    (u_3) edge node {} (u5)
    (u_2) edge [bend right=40] node {} (u_5)
    (d_1) edge node {} (d5)
    (d6) edge node {} (d_6)
    (d3) edge node {} (d4)
    (d1) edge node {} (d_4)
    (d_3) edge node {} (d2)
    (d_2) edge [bend left=40] node {} (d_5);

\end{tikzpicture}
\caption{Reidemeister Moves II and III}

\end{center}
\end{figure}

Consider the elements $T_{r_i}$, $T_{r_i}^{-1}$, and $T_{e_i}$  of Figure \ref{gen}. 
\begin{figure}[H] 
\begin{center}
\begin{tikzpicture}
   \node[pnt] at (-3,0) (v_1) {};
   \node[pnt] (v_2) [right of=v_1] {};
   \node[pnt] (v_3) [right of=v_2] {};
   \node[pnt] (v_4) [right of=v_3] {};
   \node[pnt] (v_5) [right of=v_4] {};
   \node[pnt] (v_6) [right of=v_5] {};
   \node at (-2.5,-.5) {$\dots$};
   \node at (1.5,-.5) {$\dots$};
   \node at (-1,.5) {\small$i$};
   \node at (0,.5) {\small$i+1$};
   \node at (-.5,-2) {\large$T_{r_i}$};
   \node[pnt] (v_7) [below of=v_1] {};
   \node[pnt] (v_8) [below of=v_2] {};
   \node[pnt] (v_9) [below of=v_3] {};
   \node[pnt] (v_10) [below of=v_4] {};
   \node[pnt] (v_11) [below of=v_5] {};
   \node[pnt] (v_12) [below of=v_6] {};
   \node[pnt] at (4,0) (w_1) {};
   \node[pnt] (w_2) [right of=w_1] {};
   \node[pnt] (w_3) [right of=w_2] {};
   \node[pnt] (w_4) [right of=w_3] {};
   \node at (4.5,-.5) {$\dots$};
   \node at (8.5,-.5) {$\dots$};
   \node at (6,.5) {\small$i$};
   \node at (7,.5) {\small$i+1$};
   \node at (6.5,-2) {\large$T_{e_i}$};
   \node[pnt] (w_5) [right of=w_4] {};
   \node[pnt] (w_6) [right of=w_5] {};
   \node[pnt] (w_7) [below of=w_1] {};
   \node[pnt] (w_8) [below of=w_2] {};
   \node[pnt] (w_9) [below of=w_3] {};
   \node[pnt] (w_10) [below of=w_4] {};
   \node[pnt] (w_11) [below of=w_5] {};
   \node[pnt] (w_12) [below of=w_6] {};
   \node (v) at (-.5,-.5) {};
   \path[-]
    (v_1) edge node {} (v_7)
    (v_2) edge  node {} (v_8)
    (v_3) edge  node {} (v_10)
    (v_5) edge  node {} (v_11)
    (v_6) edge node {} (v_12)
    (v) edge node {} (v_4)
    (v) edge node {} (v_9)
    (w_1) edge node {} (w_7)
    (w_2) edge  node {} (w_8)
    (w_3) edge [bend right=50] node {} (w_4)
    (w_10) edge [bend right=50]  node {} (w_9)
    (w_5) edge  node {} (w_11)
    (w_6) edge node {} (w_12);
\end{tikzpicture}
\end{center}
\end{figure}

\begin{figure}[H] 
\begin{center}
\begin{tikzpicture}
   \node[pnt] at (-3,0) (v_1) {};
   \node[pnt] (v_2) [right of=v_1] {};
   \node[pnt] (v_3) [right of=v_2] {};
   \node[pnt] (v_4) [right of=v_3] {};
   \node[pnt] (v_5) [right of=v_4] {};
   \node[pnt] (v_6) [right of=v_5] {};
   \node at (-2.5,-.5) {$\dots$};
   \node at (1.5,-.5) {$\dots$};
   \node at (-1,.5) {\small$i$};
   \node at (0,.5) {\small$i+1$};
   \node at (-.5,-2) {\large$T_{r_i}^{-1}$};
   \node[pnt] (v_7) [below of=v_1] {};
   \node[pnt] (v_8) [below of=v_2] {};
   \node[pnt] (v_9) [below of=v_3] {};
   \node[pnt] (v_10) [below of=v_4] {};
   \node[pnt] (v_11) [below of=v_5] {};
   \node[pnt] (v_12) [below of=v_6] {};
     \node (v) at (-.5,-.5) {};
   \path[-]
    (v_1) edge node {} (v_7)
    (v_2) edge  node {} (v_8)
    (v_4) edge  node {} (v_9)
    (v_5) edge  node {} (v_11)
    (v_6) edge node {} (v_12)
    (v) edge node {} (v_3)
    (v) edge node {} (v_10);

\end{tikzpicture}
\caption{$T_{r_i}, T_{e_i}, T_{r_i}^{-1}$}\label{gen}
\end{center}
\end{figure}
A tangle is \textbf{reachable} if it can be obtained as a finite product of elements from $\{T_{r_i},T_{e_i},T_{r_i}^{-1}\mid 1\leq i \leq n-1\}$. The BMW monoid, $BMW_n$, consists of the set of equivalence classes of reachable tangles on $2n$ points. 

For $m,\ell,q$ parameters satisfying $q=(\ell-\ell^{-1})(m-m^{-1})^{-1}+1$, the $BMW$ algebra, $\mathcal{BMW}_n$, is the $\mathbb{C}(q,m,\ell)$-algebra with basis $BMW_n$ and the following untangling relations: 

%**make sure to label the relations!**
\begin{figure}[H] 
\begin{center}
\begin{tikzpicture}
   \node[pnt] (v_3) at (-1,0) {};
   \node[pnt] (v_4) [right of=v_3] {};
   \node[pnt] (v_9) [below of=v_3] {};
   \node[pnt] (v_10) [below of=v_4] {};
   \node (v) at (-.5,-.5) {};
   \node (e) [right of=v] {$=$};
   \node[pnt] (w_3) at (1,0) {};
   \node[pnt] (w_4) [right of=w_3] {};
   \node[pnt] (w_9) [below of=w_3] {};
   \node[pnt] (w_10) [below of=w_4] {};
   \node (w) at (1.5,-.5) {};
   \begin{scope}[shift={(.2,0)}]
   \node (m) at (2.5,-.5) {$+m$};
   \node[pnt] (u_3) at (3,0) {};
   \node[pnt] (u_4) at (3.5,0) {};
   \node[pnt] (u_9) [below of=u_3] {};
   \node[pnt] (u_10) [below of=u_4] {};
   \end{scope}
   \node (u) at (3.5,-.5) {};
   \node (m) [right of=u] {$-m$};
   \node[pnt] (q_3) at (5,0) {};
   \node[pnt] (q_4) [right of=q_3] {};
   \node[pnt] (q_9) [below of=q_3] {};
   \node[pnt] (q_10) [below of=q_4] {};
    \path[-]
    (v_3) edge  node {} (v_10)
    (v) edge node {} (v_4)
    (v) edge node {} (v_9)
    (w_4) edge  node {} (w_9)
    (w) edge node {} (w_3)
    (w) edge node {} (w_10)
    (u_9) edge  node {} (u_3)
    (u_10) edge  node {} (u_4)
    (q_4) edge [bend left=50] node {} (q_3)
    (q_9) edge [bend left=50] node {} (q_10);
\end{tikzpicture}
\end{center}
\end{figure}

\begin{figure}[h]
\begin{center}\includegraphics[scale=.65]{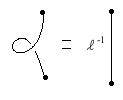}
\end{center}
\begin{center}\includegraphics[scale=.65]{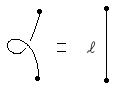}
\end{center}
\begin{center}\includegraphics[scale=.65]{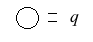}
\end{center}
\caption{Untangling Relations}
\end{figure}

Equivalently (see, for example \cite{GoodmanHauschild}), the BMW algebra has algebraic presentation given by generating set $\{T_{e_i},T_{r_i}, T_{r_i}^{-1}\mid1\leq i \leq n-1\}$, along with relations:
$$\begin{array}{rlrl}
(A1)&T_{e_i}^2=qT_{e_i} , & (A2)&T_{e_i}T_{r_i}=T_{r_i}T_{e_i}=\ell^{-1}T_{e_i}\\
(A3)&T_{e_i}T_{e_{i\pm1}}T_{e_i}=T_{e_i},&(A4) & T_{e_i}T_{r_{i\pm1}}T_{e_i}=\ell T_{e_i},\\
(A5)&T_{r_i}T_{r_{i+1}}T_{r_i}=T_{r_{i+1}}T_{r_i}T_{r_{i+1}},&(A6)&T_{r_i}T_{r_{i\pm1}}T_{e_i}=T_{e_{i\pm 1}}T_{e_{i}}=T_{e_{i\pm1}}T_{r_i}T_{r_{i\pm1}},\\
(A7)&T_{r_i}=T_{r_i}^{-1}+mT_{id}-mT_{e_i}&(A8)&T_{r_i}T_{r_j}=T_{r_j}T_{r_i},\;\;\;\;T_{r_i}T_{e_j}=T_{e_j}T_{r_i},\\
&&&\;\;\;\;T_{e_i}T_{e_j}=T_{e_j}T_{e_i},\;\; \vert i-j\vert>1 , 
\end{array} $$
for $q=(\ell-\ell^{-1})(m-m^{-1})^{-1}+1$ and $T_{id}$ the identity element. For all that follows we let $l=1$.

We map an element of the BMW monoid to the Brauer monoid by `forgetting' crossing information. Denote this map by $\phi:BMW_n\longrightarrow Br_n$.

\begin{example}\label{Brauerimage} For $x$ the tangle of Figure \ref{BMW}, $\phi(x)$ has form:
\begin{figure}[H] 
\begin{center}
\begin{tikzpicture}
   \node[pnt] at (-3,0) (v_1) {};
   \node[pnt] (v_2) [right of=v_1] {};
   \node[pnt] (v_3) [right of=v_2] {};
   \node[pnt] (v_4) [right of=v_3] {};
   \node[pnt] (v_5) [below of=v_1] {};
   \node[pnt] (v_6) [below of=v_2] {};
   \node[pnt] (v_7) [below of=v_3] {};
   \node[pnt] (v_8) [below of=v_4] {};
   \node (v) at (-.7,-.3) {};
   \node (w) at (-.65,-.35) {};
   \node (u) at (-1.6,-.8) {};
   \node (vv) at (-1.65,-.8) {};
     \path[-]
    (v_1) edge [bend right=25] node {} (v_2)
    (v_4) edge  node {} (v_6)
    (v_3) edge  node {} (v_8)
    (v_7) edge [bend right=25] node {} (v_5);
\end{tikzpicture}
\end{center}
\caption{$\phi(x)$}
\end{figure}
\end{example}
Further, each element of the Brauer monoid lifts to the BMW algebra: for $d\in Br_n$, the \textbf{BMW image} of $d$, $T_d$, realizes $d$ as a tangle by redrawing the edges of $d$ from right to left across the first $\lceil\frac{n}{2}\rceil$ points in the bottom row, lifting the pen when crossing an edge that has already been drawn, then moving to the top row of points and drawing all horizontal edges in this row, again lifting the pen when crossing an edge that has already been drawn, and finally drawing the remaining edges of $d$ from right to left across the bottom row of points.
\begin{example} For $d$ the Brauer diagram of Example \ref{Brauerimage}, the BMW image of d is:
\begin{figure}[H] 
\begin{center}
\begin{tikzpicture}
   \node[pnt] at (-3,0) (v_1) {};
   \node[pnt] (v_2) [right of=v_1] {};
   \node[pnt] (v_3) [right of=v_2] {};
   \node[pnt] (v_4) [right of=v_3] {};
   \node[pnt] (v_5) [below of=v_1] {};
   \node[pnt] (v_6) [below of=v_2] {};
   \node[pnt] (v_7) [below of=v_3] {};
   \node[pnt] (v_8) [below of=v_4] {};
   \node (v) at (-.7,-.3) {};
   \node (w) at (-.65,-.35) {};
   \node (u) at (-1.6,-.8) {};
   \node (vv) at (-1.65,-.8) {};
     \path[-]
    (v_1) edge [bend right=25] node {} (v_2)
    (u) edge  node {} (v_6)
    (v_8) edge  node {} (v_3)
    (v_4) edge  node {} (w)
    (v) edge node {} (vv)
    (v_7) edge [bend right=25] node {} (v_5);
\end{tikzpicture}
\end{center}
\caption{$T_d$}
\end{figure}

\end{example}

Note that when $\ell=1$ the BMW image of $d$ has a simple algebraic description.
\begin{definition} For $d\in Br_n$ and $s_i\in\{r_i,e_i\}$, a \textbf{reduced expression} for $d$ is a minimum length expression $d=s_{i_1}s_{i_2}\cdots s_{i_k}$ that has no occurrence of $e_{i+1}r_i$.
\end{definition}
Then the  \textbf{BMW image} of $d$, $T_d$, realizes $d$ as a tangle by setting $$T_d:=T_{s_{i_1}}T_{s_{i_2}}\cdots T_{s_{i_k}},$$
for $d=s_{i_1}s_{i_2}\cdots s_{i_k}$ a reduced expression 
%that contains the maximum number of $e$ terms over all reduced expressions of $d$. NOTE: max e's not necessary because if you look at all allowed rewrites, e terms do not change except e2r1r3e2=e2e1e3e2 but the first one would not be allowed anyway by part (1).

%Note that when $\ell=1$, the only way to rewrite $d$ as a different reduced expression without changing the number of $e$ terms is to use Brauer relations that hold in the BMW algebra as well. This ensures that the BMW image of $d$ is well-defined. 
\begin{definition} For $d\in Br_n$ and $e(d)$ the number of $e_i$ terms in a reduced expression for $d$, The \textbf{BMW length} of $T_d$ $L:\mathcal{T}_n\longrightarrow \mathbb{N}$ is given by $$L(T_d)=l'_{Br}(d)+e(d), $$ where $l'_{Br}(d)$ gives the minimum number of generators needed for a reduced expression of $d$.
\end{definition}
\begin{note}\label{refrel} The relations in the Brauer algebra together with the definition of reduced expression ensure that $e(d)$ is well defined. See Table 4 in \cite{brsimplylaced} for the possible rewrites in the Brauer algebra. 
\end{note}
\begin{example}
Let $d=r_3e_2e_1r_3$. Then $T_d=T_{r_3}T_{e_2}T_{e_1}T_{r_3}$ and $L(T_d)=l'_{Br}(d)+2=6$. An alternate reduced expression for $d$ is $d=r_3e_2r_3e_1$, which has the same BMW image by BMW relation (A8): $$T_{r_3}T_{e_2}T_{r_3}T_{e_1}=T_{r_3}T_{e_2}T_{e_1}T_{r_3}.$$

An additional expression for $d$ is $d=r_2e_3r_2e_1$. However, to have a reduced expression we must replace $e_3r_2$:
$$d=r_2e_3r_2e_1=r_2e_3e_2r_3e_1,$$ but then using Brauer relation (B7),
$d=r_3e_2r_3e_1,$ as before.

\end{example}

Theorem 3.12 of \cite{hal} shows that the BMW images of the Brauer monoid elements form a basis for $\mathcal{BMW}_n$. Denote this basis by $\mathcal{T}_n:=\{T_d\mid d\in Br_n\}$.

We consider generation of elements in $\mathcal{T}_n$ via random walks on $\mathcal{T}_n$ and translate these walks into left multiplication in the BMW algebra.

\section{The Random Walk}\label{thewalk}
%For the remainder of this paper, let $l=1$ in $\mathcal{BMW}_n$.

In the finite group case, left multiplication by a generating set gives rise to a random walk on the group. For example, for each generator $r_i$ of $S_n$, consider the probability distribution
$$   P_i(x,y) = \left\{
     \begin{array}{ll}
       1 &  \text{if}\;y=r_ix,\\
       0 &  \text{else}.
     \end{array}
   \right.
$$
Then for $l_S$ the length function on the symmetric group and $\pi$ given by $$\pi(x)=\frac{\theta^{-l_{S}(x)}}{\displaystyle\sum_{w\in S_n} \theta^{-l_{S}(w)}},$$ 
the Metropolis algorithm construction yields a chain which
interpreted as a random walk on $S_n$ is given by (see \cite{diaram}): 
\begin{equation} \tag{$*$}\label{snwalk}
\begin{split}
& \text{From}\;x\in S_n \;\text{multiply by}\; r_i.\;\text{ If the length increases, move to} \\
& \text{$r_ix$. If the length decreases, flip a}\; \theta\text{-coin and if heads move}\\
& \text{to $r_ix$. If tails, remain at } x.
\end{split}
\end{equation}

\noindent We generalize this walk to the basis of tangles $\mathcal{T}_n$ of the BMW algebra. For $T_d\in\mathcal{T}_n$ and $L$ the length function on $\mathcal{T}_n$ defined in Section \ref{BMWdefs}, let 
$$\pi(T_d)=\frac{\theta^{-L(T_d)}}{\displaystyle\sum_{w\in\mathcal{T}_n}\theta^{-L(w)}},$$

and for $y\in\mathcal{T}_n$ let 
$$P'_i(T_d, y)=\left\{\begin{array}{ll} 1& y=T_{r_id}\\ 0&\text{else}.\end{array}\right.$$

Then the Metropolis algorithm applied to $P'$ with probability distribution $\pi$ yields:
$$   K_i(T_d,y) = \left\{
     \begin{array}{ll}
       1 &  \text{if}\;y=T_{r_id}\;\text{and}\; L(y)\geq L(T_d) ,\\
       \theta &   \text{if}\;y=T_{r_id}\;\text{and}\; L(y)<L(T_d),\\
       1-\theta & \text{if}\; y=T_d.
     \end{array}
   \right.
$$

\begin{remark}\label{Snremark}
 Recall that $S_n\subseteq Br_n$ and note that for $d\in S_n$, $L(T_d)=l'_{Br}(d)=l_{Br}(d)=l_{S}(d)$, where $L,l_{Br}$, and $l_{S}$ denote the length functions on $\mathcal{T}_n,$, $Br_n$, and $S_n$. Then the submatrix of $K_i$ corresponding to states $\{T_d\mid d\in S_n\}$ is exactly the chain $M_i$ of \cite{diaram}.

\end{remark}

\noindent Interpreted as a random walk on $\mathcal{T}_n$, the chain $K_i$ describes the process:
\begin{equation} \tag{$\dagger$}
\begin{split}
& \text{From}\;T_d\in\mathcal{T}_n\text{ consider }d\in Br_n\text{ and multiply by } r_i.\text{ If the}\\ 
&\text{length of the BMW image $T_{r_id}$ increases, move to it. If the}\\
&\text{length decreases, flip a}\; \theta\text{-coin and if heads move to }T_{r_id}. \text{ If}\\ &\text{tails, remain at}\; T_d.
\end{split}
\end{equation}

\noindent In light of Proposition \ref{translate walk} below, this walk can be rephrased as:
\begin{equation} \label{rewalk} \tag{$\dagger\dagger$}
\begin{split}
& \text{From}\;T_d\in\mathcal{T}_n\text{ multiply by }T_{r_i}. \text{ If the result is an element of } \\ 
&\mathcal{T}_n,\text{ move to $T_{r_i}T_d$. Else,} \text{ flip a}\; \theta\text{-coin and if heads move to }\\
& T_{r_i}^{-1}T_d. \text{ If tails, remain at}\; T_d.\\
\end{split}
\end{equation}

\noindent Rephrasing in this way yields the equivalent corresponding Markov chain:

$$   K_i(x,y) = \left\{
     \begin{array}{ll}
       1 &  \text{if}\;y=T_{r_i}x,\\
       \theta &   \text{if}\;y=T_{r_i}^{-1}x,\\
       1-\theta & \text{if}\; y=x.
     \end{array}
   \right.$$
An example of $K_i$ can be found in Appendix \ref{App}. 

\begin{proposition}\label{translate walk} For $T_d\in\mathcal{T}_n$, $$L(T_{r_id})<L(T_d)\iff T_{r_i}T_d\notin\mathcal{T}_n.$$ 
Further, if $T_{r_i}T_d\notin\mathcal{T}_n$, then $T_{r_i}^{-1}T_d=T_{r_id}\in\mathcal{T}_n$, while if $T_{r_i}T_d\in\mathcal{T}_n$, then $T_{r_i}T_d=T_{r_id}$. 
\end{proposition}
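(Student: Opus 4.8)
The plan is to reduce both assertions to combinatorial facts about left multiplication by $r_i$ in $Br_n$ together with the $BMW$ relations (A1)--(A8) with $\ell=1$; throughout write $d'=r_id\in Br_n$. First I would record two elementary features of the map $d\mapsto r_id$. Stacking the permutation diagram $r_i$ on top of $d$ leaves the bottom row of $d$ unchanged, so the number of horizontal arcs is unchanged and $e(r_id)=e(d)$. Since $r_i^2=1$ in $Br_n$ we have $|l'_{Br}(d')-l'_{Br}(d)|\le 1$, and appealing to the structure of reduced expressions in the Brauer monoid (the exchange/deletion behavior underlying the rewrites of Note~\ref{refrel}) exactly one of three cases occurs: (a) $d$ has a reduced expression beginning with $r_i$ and $l'_{Br}(d')=l'_{Br}(d)-1$; (b) the top points $i,i+1$ of $d$ lie on a common arc, $r_id=d$, and $d$ has a reduced expression beginning with $e_i$; (c) $l'_{Br}(d')=l'_{Br}(d)+1$ and $d$ has a reduced expression beginning with neither $r_i$ nor $e_i$, whose prepension by $r_i$ is again reduced. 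Because $L(T_d)=l'_{Br}(d)+e(d)$ and $e$ is unchanged, $L(T_{r_id})<L(T_d)$ holds precisely in case (a); it then remains to match case (a) with the condition $T_{r_i}T_d\notin\mathcal{T}_n$.

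Next I would do the algebra, case by case. In case (c), prepending $r_i$ to a reduced expression of $d$ is again a reduced expression (for $d'$), so $T_{r_i}T_d=T_{d'}=T_{r_id}\in\mathcal{T}_n$. In case (b), factoring the leading $e_i$ gives $T_d=T_{e_i}T_{d''}$, whence $T_{r_i}T_d=T_{r_i}T_{e_i}T_{d''}=\ell^{-1}T_{e_i}T_{d''}=T_d=T_{r_id}\in\mathcal{T}_n$ by (A2) with $\ell=1$; cases (b) and (c) together give the second half of the ``further'' clause. In case (a), write $d=r_id'$ with $T_d=T_{r_i}T_{d'}$; then $T_{r_i}^{-1}T_d=T_{d'}=T_{r_id}\in\mathcal{T}_n$, which is the first half of the ``further'' clause, and, using (A7) to compute $T_{r_i}^2=T_{id}+mT_{r_i}-m\ell^{-1}T_{e_i}$,
\[ T_{r_i}T_d=T_{r_i}^2T_{d'}=T_{d'}+m\,T_d-m\,T_{e_i}T_{d'}. \]
To conclude $T_{r_i}T_d\notin\mathcal{T}_n$ I must show $T_d$ cannot cancel out of the $\mathcal{T}_n$-expansion of the right-hand side: $T_{d'}=T_{r_id}$ is a single basis vector different from $T_d$ (since $r_id\ne d$ in case (a)), and every basis vector appearing in $T_{e_i}T_{d'}$ has the top points $i,i+1$ joined by an arc, because the top arc contributed by the leading $T_{e_i}$ sits above all of $T_{d'}$ and meets no other strand, hence is untouched by the Reidemeister/skein reductions to canonical form. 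Since in case (a) the tangle $T_d$ does not have $i,i+1$ joined at the top, $T_d$ survives with coefficient exactly $m\ne0$, so $T_{r_i}T_d\notin\mathcal{T}_n$.

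I expect the non-cancellation step in case (a) to be the main obstacle: one must be confident that reducing the concatenated tangle $T_{e_i}T_{d'}$ to a sum of $BMW$ images never produces $T_d$, which means tracking the ``free cap'' at positions $i,i+1$ through every move used, including the case where $e_id'$ creates closed loops and a power of $q$ is extracted. A secondary point is the Brauer-monoid input of the first paragraph --- that $l'_{Br}$ changes by exactly $\pm1$ unless $r_id=d$, and that the chosen reduced expressions are compatible with prepending $r_i$ or factoring out a leading $e_i$ --- which should be cited precisely from the rewriting theory referenced in Note~\ref{refrel}.
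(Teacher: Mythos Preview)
Your trichotomy (a)/(b)/(c) is not exhaustive, and the claim $e(r_id)=e(d)$ is false; both fail on the same small example. Take $n=3$, $d=e_1e_2$, $i=2$. Then $r_2d=r_1e_2$ in $Br_3$ (use $e_1e_2=r_2r_1e_2$, a Brauer analogue of (A6), and cancel $r_2^2$). Both $e_1e_2$ and $r_1e_2$ are reduced of length $2$, so $l'_{Br}(r_2d)=l'_{Br}(d)$, yet $r_2d\ne d$; and $e(d)=2$ while $e(r_2d)=1$. Thus $L(T_{r_2d})=3<4=L(T_d)$ comes from a drop in $e$, not in $l'_{Br}$. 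This is in none of your cases: $d$ has no reduced expression starting with $r_2$ (not (a)), top points $2,3$ are not joined (not (b)), and prepending $r_2$ to $e_1e_2$ is not reduced since $r_2e_1e_2$ has length $3>2$ (not (c)). Meanwhile the proposition does hold here: $T_{r_2}T_d=T_{r_1}T_{e_2}+m\,T_{e_1}T_{e_2}-m\,T_{e_2}\notin\mathcal{T}_n$ and $T_{r_2}^{-1}T_d=T_{r_1}T_{e_2}=T_{r_2d}$.

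So the ``secondary point'' you flagged is the actual obstruction: the Brauer monoid does \emph{not} behave like a Coxeter group under left multiplication by $r_i$---length can stay fixed while the word changes, with the slack absorbed by a change in the $e$--count. The paper does not try to prove $e(r_id)=e(d)$ or any global trichotomy; instead it commutes $T_{r_i}$ through the chosen reduced word using only (A5) and (A8) until it reaches one of three local configurations (the $T_{r_i}$ is adjacent to some $T_{s_j}$ with $j=i$, or to a block $T_{s_{i\pm1}}T_{s_i}$, or has passed all the way through), and then checks the finitely many possibilities for the nearby letters. Your missing example lands in the paper's second configuration with $s_{i-1}s_i=e_{i-1}e_i$, handled via (A6). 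By contrast, your non-cancellation argument in case (a)---that the free cap at $i,i{+}1$ in $T_{e_i}T_{d'}$ survives every skein reduction---is fine and is not where the difficulty lies.
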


\begin{proof}
First write $T_d=T_{s_{i_1}}T_{s_{i_2}}\cdots T_{s_{i_k}}$, for $s_{i_1}\cdots s_{i_k}$ a reduced expression for $d$ with maximum number of $e$ terms. Then $$T_{r_i}T_{d}=T_{r_i}T_{s_{i_1}}T_{s_{i_2}}\cdots T_{s_{i_k}},$$ which, after possibly rearranging using BMW relations (A5) and (A8), has one of the following forms, for some $1\leq j\leq k-2$:

\begin{enumerate}
\item $T_{r_i}T_{d}=T_{s_{i_1}}T_{s_{i_2}}\cdots T_{s_{i_j}}T_{r_i}T_{s_i}T_{s_{i_{j+2}}}T_{s_{i_{j+3}}}\cdots T_{s_{i_k}}$ 
\item $T_{r_i}T_{d}=T_{s_{i_1}}T_{s_{i_2}}\cdots T_{s_{i_j}}T_{r_i}T_{s_{i\pm 1}}T_{s_i}T_{s_{i_{j+3}}}\cdots T_{s_{i_k}},$
\item $T_{r_i}T_{d}=T_{r_i}T_{s_{i_1}}T_{s_{i_2}}\cdots T_{s_{i_k}}, \vert i_1-i\vert >1.$ 
\end{enumerate}

The proof reduces to checking each possible case. For example, if in case (1) with $s_is_{i_{j+2}}=r_ie_{i\pm 1}$,
$$T_{r_i}T_d=T_{s_{i_1}}T_{s_{i_2}}\cdots T_{s_{i_j}}T_{r_i}T_{r_i}T_{e_{i\pm 1}}T_{s_{i_{j+3}}}\cdots T_{s_{i_k}}.$$
Since $T_{r_i}T_{r_i}\notin\mathcal{T}_n$, we see that $T_{r_i}T_d\notin \mathcal{T}_n$. Further, since BMW relations (A5) and (A8) hold in the Brauer monoid,
$$r_id=r_is_{i_1}\cdots s_{i_k}=s_{i_1}s_{i_2}\cdots s_{i_j}r_ir_ie_{i\pm 1}s_{i_{j+3}}\cdots s_{i_k},$$
which by Brauer relation (B1) gives
$$r_id=s_{i_1}\cdots s_{i_j}e_{i\pm 1}s_{i_{j+3}}\cdots s_{i_k},$$ a reduced expression for $r_id$. Thus $l'_{Br}(r_id)=k-1.$ By Note \ref{refrel} all reduced expressions  have the same number of $e$ terms, so $e(r_id)=e(d)$. Hence, $$L(T_{r_id})<L(d).$$
 %because any other way to reduce it would have happened earlier and is unaffected by losing the r_i$, so $l'(r_id)=k-1$.

%Next note that if $e(r_id)>e(d)$ then there is a reduced expression for $r_id$ with length $k-1$ and at least one more additional $e$ term than in $s_{i_1}\cdots s_{i_j}e_{i\pm 1}s_{i_{j+3}}\cdots s_{i_k}$. However,  
%Now let $d'=r_id$ and recall for $w\in Br_n$, $e(w)$ gives the maximum number of $e$ terms over all reduced expressions for $w$. If $e(d')>e(d)$, then there is an expression for $d'$ with length $k-1$ and at least one more additional $e$ term than in $s_{i_1}\cdots s_{i_j}e_{i+ 1}s_{i_{j+3}}\cdots s_{i_k}$. However, by Brauer relations (1)-(8), the only way to increase the number of $e$ terms without changing the length of an expression is through the relation $$e_{i+1}r_ir_{i+2}e_{i+1}=e_{i+1}e_ie_{i+2}e_{i+1}.$$
%This would require either $s_{i_{j-2}}s_{i_{j-1}}s_{i_j}=e_{i+1}r_ir_{i+2}$ or $s_{i_{j+3}}s_{i_{j+4}}s_{i_{j+5}}=r_ir_{i+2}e_{i+1}$, which contradicts the fact that $s_{i_1}\cdots s_{i_k}$ is a reduced expression for $d$ with maximal number of $e$ terms. 

For the second statement, note that $$T_{r_i}^{-1}T_d=T_{s_{i_1}}T_{s_{i_2}}\cdots T_{s_{i_j}}T_{r_i}^{-1}T_{r_i}T_{e_{i\pm 1}}T_{s_{i_{j+3}}}\cdots T_{s_{i_k}}=T_{r_id}.$$

The remaining cases are checked similarly.
\end{proof}

In \cite{diaram}, Diaconis and Ram translate the Markov chain arising from (\ref{snwalk}) into left multiplication by Hecke algebra elements on a suitably chosen basis. Similarly, we translate the chains $K_i$ arising from the Metropolis construction into left multiplication by BMW algebra elements on the basis $\mathcal{T}_n$. 

Define $\mathscr{T}_{r_i}, \mathscr{T}_{e_i}:\mathcal{T}_n\longrightarrow\mathcal{BMW}_n$ as follows: for $x\in\mathcal{T}_n$, 

$$\begin{array}{l}
\mathscr{T}_{r_i}(x)=T_{r_i}x\\
\\
\\\mathscr{T}_{e_i}(x)=\left\{
     \begin{array}{ll}
       T_{e_i}x &  \text{if}\;T_{r_i}x\notin\mathcal{T}_n,\\
       T_{r_i}x &  \text{else}.
     \end{array}
   \right.\\
\end{array}$$

\begin{theorem}\label{leftmult}[Theorem \ref{leftmult1}] Let $Br_n$ be the Brauer monoid and $\mathcal{BMW}_n(m,l)$ the BMW algebra with basis $\mathcal{T}_n=\{T_d\mid d\in Br_n\}$. Let $m=(1-\theta)(\theta)^{-1}$ and $\ell=1$. Then the chain $K_i$ is the same as the matrix of left multiplication by 
$$\theta\mathscr{T}_{r_i}+(1-\theta)\mathscr{T}_{e_i},$$
with respect to the basis $\mathcal{T}_n$ of $\mathcal{BMW}_n$.
\end{theorem}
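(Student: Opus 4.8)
The plan is to establish the matrix identity entrywise. Recall that the $(x,y)$-entry of the matrix of a linear operator $\Phi$ on $\mathcal{BMW}_n$ with respect to the basis $\mathcal{T}_n$ is the coefficient of $y$ in the expansion of $\Phi(x)$ in that basis, so it suffices to show that for every $x=T_d\in\mathcal{T}_n$ the expansion of $(\theta\mathscr{T}_{r_i}+(1-\theta)\mathscr{T}_{e_i})(x)$ in $\mathcal{T}_n$ has coefficients equal to the row $K_i(x,\cdot)$. Proposition~\ref{translate walk} partitions $\mathcal{T}_n$ into two cases according to whether $T_{r_i}x\in\mathcal{T}_n$ or $T_{r_i}x\notin\mathcal{T}_n$; these are exactly the ``length increases'' and ``length decreases'' branches appearing both in the walk $(\dagger)$ and in the piecewise definition of $\mathscr{T}_{e_i}$, so the proof just runs through the two cases with $x$ fixed.

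In the first case, $T_{r_i}x\in\mathcal{T}_n$, Proposition~\ref{translate walk} identifies $T_{r_i}x=T_{r_id}$, and the definition of $\mathscr{T}_{e_i}$ also gives $\mathscr{T}_{e_i}(x)=T_{r_i}x$; hence $(\theta\mathscr{T}_{r_i}+(1-\theta)\mathscr{T}_{e_i})(x)=\theta\,T_{r_i}x+(1-\theta)\,T_{r_i}x=T_{r_id}$, a single basis element, so row $x$ is the indicator vector of $T_{r_id}$. This is precisely $K_i(x,\cdot)$ in the regime $L(T_{r_id})\ge L(T_d)$. (Note that $T_{r_id}$ need not differ from $x$, e.g.\ when $r_id=d$ in $Br_n$ as for $d=e_i$, but the conclusion holds either way.)

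In the second case, $T_{r_i}x\notin\mathcal{T}_n$, Proposition~\ref{translate walk} gives $T_{r_i}^{-1}x=T_{r_id}\in\mathcal{T}_n$ with $L(T_{r_id})<L(T_d)$, so in particular $T_{r_id}\ne x$, and now $\mathscr{T}_{e_i}(x)=T_{e_i}x$, so the operator produces $\theta\,T_{r_i}x+(1-\theta)\,T_{e_i}x$. Here I would invoke the skein relation (A7), which at $\ell=1$ reads $T_{r_i}=T_{r_i}^{-1}+mT_{id}-mT_{e_i}$; multiplying on the right by $x$ and substituting gives
$$\theta\,T_{r_i}x+(1-\theta)\,T_{e_i}x=\theta\,T_{r_i}^{-1}x+\theta m\,x+(1-\theta-\theta m)\,T_{e_i}x .$$
With $m=(1-\theta)\theta^{-1}$ we have $\theta m=1-\theta$, so the coefficient of $T_{e_i}x$ vanishes and the expression collapses to $\theta\,T_{r_i}^{-1}x+(1-\theta)\,x=\theta\,T_{r_id}+(1-\theta)\,x$, a combination of the two distinct basis vectors $T_{r_id}$ and $x$. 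Thus row $x$ has entry $\theta$ in column $T_{r_id}$ and $1-\theta$ in column $x$, which matches $K_i(x,\cdot)$ in the regime $L(T_{r_id})<L(T_d)$. Since the two cases exhaust $\mathcal{T}_n$, the matrices coincide.

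I expect the only genuinely delicate point to be this cancellation in the second case: the normalization $m=(1-\theta)\theta^{-1}$ is dictated exactly by the requirement that the extra $T_{e_i}x$ term introduced by relation (A7) be annihilated, so that the operator lands in the span of the two basis elements the walk is meant to reach. Everything else is careful bookkeeping --- matching the dichotomy of Proposition~\ref{translate walk} and the piecewise definition of $\mathscr{T}_{e_i}$ against the three cases defining $K_i$, keeping $\ell=1$ fixed when using (A2) and (A7), and observing that the boundary sub-case $L(T_{r_id})=L(T_d)$ is absorbed into the first case without incident.
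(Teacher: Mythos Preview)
Your proof is correct and follows essentially the same route as the paper's: split on whether $T_{r_i}x\in\mathcal{T}_n$, handle the first case by the definition of $\mathscr{T}_{e_i}$, and in the second case apply relation (A7) and use the choice $m=(1-\theta)\theta^{-1}$ to cancel the $T_{e_i}x$ term. Your write-up is in fact a bit more explicit than the paper's about the cancellation and about matching the outcome to the rows of $K_i$.
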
 
\begin{proof}

Let $x\in\mathcal{T}_n$ and consider left multiplication by $T_{r_i}$. If $T_{r_i}x\in\mathcal{T}_n$,
$$(\theta\mathscr{T}_{r_i}+(1-\theta)\mathscr{T}_{e_i})x= \theta T_{r_i}x+(1-\theta)T_{r_i}x=T_{r_i}x.$$

If $T_{r_i}x\notin\mathcal{T}_n$ then by BMW Relation (A7), 
$$T_{r_i}x=(T_{r_i}^{-1}+m T_{id}- ml^{-1}T_{e_i})x=T_{r_i}^{-1}x+(1-\theta)(\theta)^{-1} x- (1-\theta)(\theta)^{-1} T_{e_i}x.$$

By Proposition \ref{translate walk}, $T_{r_i}^{-1}x\in\mathcal{T}_n$, and 
$$\begin{array}{ll}
(\theta\mathscr{T}_{r_i}+(1-\theta)\mathscr{T}_{e_i})x&=  \theta T_{r_i}^{-1}x+(1-\theta) x. 
\end{array}$$
%=\theta T_{r_i}^{-1}x+(1-\theta) x- (1-\theta)T_{e_i}x+(1-\theta)T_{e_i}x.\\
\end{proof}

The chains $K_i$ provide scanning strategies for generating elements of the BMW and Brauer monoids:
$$\begin{array}{ll}
      \displaystyle \frac{1}{n-1}\sum_{i=1}^{n-1} K_i & \text{(random scan)},\\
       K_1K_2\cdots K_{n-1}K_{n-1}\cdots K_2K_1 & \text{(short systematic scan)},\\
       (K_1\cdots K_{n-1}K_{n-1}\cdots K_1)\cdots (K_1K_2K_2K_1)(K_1K_1) & \text{(long systematic scan)}.\\
     \end{array}
$$ 
Theorem \ref{leftmult}, coupled with the results of Section \ref{walkanalysis}, allows for the study of the rate of convergence of the systematic scans arising from the chains $K_i$ using Fourier analysis on the BMW algebra.

\section{Analysis of the Walk}\label{walkanalysis}
Let $K$  denote the matrix corresponding to any of the three scans (random, short systematic, long systematic), as the results of this section hold true for all three scans.

Note that $K$ is Markov and recall that a communication class $C$ of a Markov chain is closed if for each state $x\in C$ and for all $y\notin C$, $y$ is not accessible from $x$. We determine the closed communication classes of $K$ and analyze the stationary distribution of each closed communication class.

The communication classes of $K$ depend on the number of \emph{lower horizontal edges} in the tangle diagrams for the states.

\begin{definition} Let $x\in\mathcal{T}_n$. An edge of $x$ is \textbf{lower (respectively, upper) horizontal} if it connects two points that are both on the bottom (respectively, top) row of the diagram of $x$.\end{definition}
\begin{example} In Figure \ref{e's}, $E_3$ is the only lower horizontal edge and $E_1$ is the only upper horizontal edge.
\begin{figure}[H] 
\begin{center}
\begin{tikzpicture}
   \node[pnt] at (-3,0) (v_1) {};
   \node[pnt] (v_2) [right of=v_1] {};
   \node[pnt] (v_3) [right of=v_2] {};
   \node[pnt] (v_4) [right of=v_3] {};
   \node[pnt] (v_5) [below of=v_1] {};
   \node[pnt] (v_6) [below of=v_2] {};
   \node[pnt] (v_7) [below of=v_3] {};
   \node[pnt] (v_8) [below of=v_4] {};
   \node (v) at (-.7,-.3) {};
   \node (w) at (-.65,-.35) {};
   \node (u) at (-1.6,-.8) {};
   \node (vv) at (-1.65,-.8) {};
     \path[-]
    (v_1) edge [bend right=25] node {$E_1$} (v_2)
    (u) edge  node {} (v_6)
    (v) edge  node {$E_2$} (v_8)
    (w) edge  node {} (v_3)
    (vv) edge  node {$E_4$} (v_4)
    (v_7) edge [bend right=25] node {$E_3$} (v_5);
\end{tikzpicture}
\caption{}\label{e's}
\end{center}
\end{figure}

\end{example}

Note that left multiplication by $T_{r_i}, T_{r_i}^{-1}$ does not affect existing lower horizontal edges in a tangle diagram, nor can it create new ones.  As $K$ is determined by left multiplication by $T_{r_i}, T_{r_i}^{-1}$,  the communication classes of $K$ consist of states with common lower horizontal edges. For $x_i\in\mathcal{T}_n$, let $\mathbf{X}_i$ denote its communication class:
$$\mathbf{X}_i:=\{y\in\mathcal{T}_n\mid\text{lower horizontal edges of $y$ the same as those of $x_i$}\}.$$

For each communication class $\mathbf{X}_i$, let $[K]_i$ denote the corresponding submatrix of $K$. Note that the communication class for $x_0:=T_{id}$ consists of the states $\{T_d\mid d\in S_n\}$. Then by  Remark \ref{Snremark}, $[K_i]_0=M_i$, and so $[K]_0$ can be analyzed using the methods of \cite{diaram}. For the remainder of the paper we consider the remaining communication classes of $K$.

To analyze the time to stationarity of the submatrix $[K]_1$ corresponding to a communication class $\mathbf{X}_1$, we pair $\mathbf{X}_1$ with a communication class, $\mathbf{X}_2$, whose states have the same number of lower horizontal edges as those in $\mathbf{X}_1$. For $w\in\mathbf{X}_1$, let $w^*$ denote the element of $\mathbf{X}_2$ with the same upper configuration as $w$. Define the matrix:
$$\tilde{K}(x,y) = \left\{
     \begin{array}{cl}
       K(x,y) &  \text{if}\;x,y\in\mathbf{X}_1\;,\\
       K(x,y) & \text{if } x=w^*,y=z^* \text{ for } w,z\in\mathbf{X}_1,\\ 
        1 &  \text{if}\;x=y, x\notin\mathbf{X}_1\cup\mathbf{X}_2,\\
        0 &  \text{else}.\\
       \end{array}
   \right.
$$

\begin{example}\label{exmat} For $\mathcal{T}_3\subseteq \mathcal{BMW}_3$, let $x_1=T_{e_1}$ and $x_2=T_{e_1}T_{r_2}$, so $\mathbf{X}_1=\{T_{e_1}, T_{r_2}T_{e_1}, T_{e_2}T_{e_1}\}$ and $\mathbf{X}_2=\{T_{e_1}T_{r_2}, T_{r_2}T_{e_1}T_{r_2}, T_{e_2}T_{e_1}T_{r_2}\}.$ Note that $T_{e_1}^*=T_{e_1}T_{r_2}$, while $T_{r_2}T_{e_1}^*=T_{r_2}T_{e_1}T_{r_2}$ and $T_{e_2}T_{e_1}^*=T_{e_2}T_{e_1}T_{r_2}$.

Then for $K=\frac{1}{2}(K_1+K_2)$, 
$$2[K]_1=\bordermatrix{~ & T_{e_1} & T_{r_2}T_{e_1}&T_{e_2}T_{e_1} \cr
              ~ & 1 & \theta&0 \cr
              ~ & 1 & 1-\theta &\theta \cr
              ~& 0 & 1& 2-\theta \cr}, $$                  
                  
                  $$2[K]_2=\bordermatrix{~ &T_{e_1}T_{r_2}&T_{r_2}T_{e_1}T_{r_2}&T_{e_2}T_{e_1}T_{r_2}\ \cr
              ~ & 1 & \theta&0 \cr
              ~ & 1 & 1-\theta &\theta \cr
              ~& 0 & 1& 2-\theta \cr}. $$

\vspace{.5cm}
Then $\tilde{K}=[K]_1\bigoplus [K]_2\bigoplus I_9$, for $I_9$ the $9\times 9$ identity matrix.
\end{example}

Let $\pi$ denote the stationary distribution of $\tilde{K}$ and for $T_x\in\mathcal{T}_n$ let $[\pi]_x$ denote the column of $\pi$ corresponding to $T_x$:
$$[\pi]_x:=\sum_{T_y\in\mathcal{T}_n}\pi_x(y)T_y.$$
Note that $\pi_x(y)$ represents the probability of ending at state $T_y$ after starting at $T_x$. To analyze the time to stationarity of $\tilde{K}$ we consider the total variation norm: 
\begin{equation}\label{number} |\tilde{K}^m_x-\pi|_{TV}.\end{equation}

We bound the total variation norm using a trace norm on $\mathcal{BMW}_n$.
\begin{definition} Define $\tilde{\tau}:\mathcal{T}_n\rightarrow\mathbb{C}$ as follows: for $x\in\mathcal{T}_n$,
$$\tilde{\tau}(x) = \left\{
     \begin{array}{cl}
        1 &  \text{if}\;x=T_{id} ,\\
        0 &  \text{else},\\
       \end{array}
   \right.
$$ The \textbf{restricted trace}, $\tau: \mathcal{BMW}_n\rightarrow  \mathbb{C}$, is the linear extension of $\tilde{\tau}$ to $\mathcal{BMW}_n$.
\end{definition}
\begin{proposition}\label{oftenzero} For $T_x,T_y\in\mathcal{T}_n$, $\tau(T_xT_y) = \left\{
     \begin{array}{cl}
        1 &  \text{if}\;x=y^{-1} ,\\
        0 &  \text{else}.\\
       \end{array}
   \right.
   $
\end{proposition}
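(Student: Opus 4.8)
The plan is to work with multiplication in the Brauer monoid $Br_n$, since by construction (with $\ell=1$) the basis element $T_d$ lifts a reduced Brauer word, and $\tau$ picks out exactly the coefficient of $T_{id}$ in an expansion over $\mathcal{T}_n$. First I would reduce the statement about $\tau(T_xT_y)$ to a statement about the product $T_x T_y$ expanded in the basis $\mathcal{T}_n$: by linearity of $\tau$, it suffices to show that when $T_xT_y$ is rewritten via the BMW relations (A1)--(A8) as a $\mathbb{C}(q,m,\ell)$-combination of basis elements $T_d$, the coefficient of $T_{id}$ is $1$ if $x = y^{-1}$ and $0$ otherwise. Here $y^{-1}$ denotes the Brauer diagram obtained by flipping $y$ top-to-bottom (equivalently, the inverse tangle; note $y^{-1}\in Br_n$ and $T_{y^{-1}}\in\mathcal{T}_n$).

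The main case split is on whether $xy$, as a product of Brauer diagrams, equals the identity diagram. If $x = y^{-1}$, then in $Br_n$ we have $xy = id$ with no closed loops, so $T_x T_y$ reduces to $T_{id}$ plus terms $T_d$ with $d\neq id$; I would argue that the reduction process (untangling crossings via (A7), absorbing $T_{e_i}$-pairs via (A1), (A3)) produces $T_{id}$ with coefficient exactly $1$ — the point being that every rewrite in (A1)--(A8) that could change the coefficient of $T_{id}$ either leaves it alone or contributes a term supported away from $T_{id}$ once the diagram is genuinely the identity permutation. This is cleanest if one observes that the "Brauer shadow" map $\phi$ is an algebra-level statement: modulo the ideal spanned by $\{T_d : d \notin S_n\}$ and the crossing-resolution terms, $T_xT_y$ specializes to the Brauer product, and $T_{id}$ appears iff $xy = id$ in $Br_n$. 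If $x \neq y^{-1}$, then either $xy$ has strictly more than zero through-strands collapsed (so $\phi(xy)\neq id$ and no rewrite can create $T_{id}$, since relations (A1)--(A8) preserve the multiset of horizontal edges up to the controlled moves, and none can remove all of them to reach $S_n$ let alone $id$), or $xy$ is a non-identity permutation (handled exactly as in the Hecke/Iwahori case of \cite{diaram}, where the length strictly exceeds $0$ and $T_{id}$ never appears in the expansion of a reduced word of positive length). In both subcases $\tau(T_xT_y)=0$.

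I expect the main obstacle to be bookkeeping the coefficient of $T_{id}$ through the crossing-resolution relation (A7), $T_{r_i} = T_{r_i}^{-1} + m\,T_{id} - m\ell^{-1}T_{e_i}$: each application spawns a term proportional to $T_{id}$ locally, and one must confirm these only ever assemble into the global $T_{id}$ when $x=y^{-1}$, and otherwise get killed by a surviving $T_{e_j}$ factor (via $T_{e_j}$ never reducing to $T_{id}$, as $e_j$ has horizontal edges). The efficient way around this is to invoke Note \ref{refrel} and the fact that $\mathcal{T}_n$ is a genuine basis (Theorem 3.12 of \cite{hal}): since every element of $\mathcal{BMW}_n$ has a unique expansion in $\mathcal{T}_n$, it is enough to exhibit $T_xT_y = T_{xy}\cdot(\text{correction terms})$ with $T_{xy}=T_{id}$ iff $xy=id$, and then verify by a direct diagram computation that, when $xy=id$ in $Br_n$, the correction terms carry no $T_{id}$ component — which follows because each correction term still contains at least one crossing or one horizontal edge and hence is not $T_{id}$ after full reduction. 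I would present the $S_n$ sub-case by citing the corresponding computation in \cite{diaram}, and give the new content — the horizontal-edge cases — by tracking lower/upper horizontal edges under the relations, exactly the invariant already used in Section \ref{walkanalysis}.
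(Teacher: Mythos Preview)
Your proposal contains one genuine error and one unnecessary complication.

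The error is your interpretation of $y^{-1}$. In $Br_n$, only the permutation diagrams (those in $S_n\subset Br_n$) are invertible; for $y\notin S_n$ the top-to-bottom flip of $y$ is \emph{not} an inverse. Your claim that ``if $x=y^{-1}$ then in $Br_n$ we have $xy=id$ with no closed loops'' is false under your flip definition: take $y=e_1$, whose flip is again $e_1$, and observe $e_1e_1=q\,e_1\neq id$. The condition ``$x=y^{-1}$'' in the proposition is meant in the honest sense---it can only hold when $y\in S_n$---so your case split is misaligned from the start.

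The paper's argument avoids all of your coefficient-tracking through relation (A7). It proceeds in two strokes. First, if a $T_{e_i}$ appears as a factor in either $T_x$ or $T_y$, then by inspection of (A1)--(A8) every term of the expansion of $T_xT_y$ in the basis $\mathcal{T}_n$ still carries at least one $T_{e_j}$ factor; hence no term is $T_{id}$ and $\tau(T_xT_y)=0$. This single observation disposes of all $x$ or $y$ outside $S_n$ at once, and in particular handles the local $m\,T_{id}$ spawned by (A7): that term sits inside a larger product which still contains some $T_{e_j}$, so it never becomes the global identity. Second, for $x,y\in S_n$ the restriction $\tau|_{S_n}$ is (a scalar multiple of) the standard trace on the Iwahori--Hecke algebra, and the result is the known fact from \cite{diaram}. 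Your final paragraph essentially lands on this same two-case split, so the core idea is there; you should lead with it rather than burying it under the (A7) bookkeeping and the incorrect reading of $y^{-1}$.
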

\begin{corollary}\label{istrace} $\tau$ is a trace function on $\mathcal{BMW}_n$.
\end{corollary}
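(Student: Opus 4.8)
The plan is to reduce the trace identity $\tau(ab)=\tau(ba)$ to the basis case and then read off the conclusion from Proposition \ref{oftenzero}. Since $\tau$ is by construction the $\mathbb{C}$-linear extension of $\tilde{\tau}$, and since multiplication in $\mathcal{BMW}_n$ is $\mathbb{C}$-bilinear, writing $a=\sum_x a_x T_x$ and $b=\sum_y b_y T_y$ with $a_x,b_y\in\mathbb{C}(q,m,\ell)$ gives
$$\tau(ab)=\sum_{x,y}a_x b_y\,\tau(T_xT_y),\qquad \tau(ba)=\sum_{x,y}a_x b_y\,\tau(T_yT_x).$$
Hence it suffices to prove $\tau(T_xT_y)=\tau(T_yT_x)$ for all $T_x,T_y\in\mathcal{T}_n$.

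For this I would apply Proposition \ref{oftenzero} to each of the two products: $\tau(T_xT_y)$ equals $1$ when $x=y^{-1}$ as elements of $Br_n$ and equals $0$ otherwise, while $\tau(T_yT_x)$ equals $1$ when $y=x^{-1}$ and equals $0$ otherwise. The point is that these two conditions are equivalent, because the relation ``$x$ is the inverse of $y$'' is symmetric in any monoid: if $x=y^{-1}$ then $x$ is invertible with inverse $y$, so $y=x^{-1}$, and conversely; and when neither holds, both traces are $0$. Therefore the two quantities agree term by term, and summing over $x,y$ yields $\tau(ab)=\tau(ba)$.

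I do not expect a real obstacle here; the only thing requiring a moment's care is recognizing that Proposition \ref{oftenzero} already carries all the content, together with the observation that the invertibility relation on $Br_n$ is symmetric (in fact $x=y^{-1}$ forces $x,y\in S_n$, since a Brauer diagram with a horizontal edge has fewer than $n$ through-strands and hence admits no inverse). Everything else is formal linearity and bilinearity.
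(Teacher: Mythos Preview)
Your proposal is correct and is essentially the approach the paper intends: the corollary is stated immediately after Proposition~\ref{oftenzero} with no separate argument, and your proof simply spells out why it follows --- reduce by bilinearity to basis elements, apply the proposition to both $\tau(T_xT_y)$ and $\tau(T_yT_x)$, and use the symmetry of the relation $x=y^{-1}$.
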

\begin{proof}[Proof of Proposition \ref{oftenzero}] Let $T_x, T_y\in \mathcal{T}_n$. Then $T_x=T_{s_{j_1}}\cdots T_{s_{j_k}}$, where for each $1\leq j\leq k$, $T_{s_{i_j}}\in\{T_{r_i},T_{e_i}\mid 1\leq i\leq n-1\}$. First note by the BMW relations (A1)-(A8) that if for some $1\leq i\leq n-1$, $T_{e_i}$ is a factor of $T_x$, then each term of the product $T_xT_y$ has at least one $T_{e_i}$ factor. Hence, no term in the product $T_xT_y$ is the identity, so $\tau(T_xT_y)=0$. Similarly, $\tau(T_yT_x)=0$. 

Thus, if $T_{s_{j_l}}=T_{e_i}$ for some $1\leq l\leq k$, $1\leq i\leq n-1$, then $\tau(T_xT_y)=\tau(T_yT_x)=0$ for all $T_y\in\mathcal{T}_n$. Equivalently, $\tau(T_xT_y)=0$ for all $x\in Br_n- S_n$, $y\in Br_n$. 

Next note that $T_x\in \mathcal{T}_n$ has an inverse iff $x\in S_n\subset Br_n$. Hence we need show for $x,y\in S_n$ that $$\tau(T_xT_y) = \left\{
     \begin{array}{cl}
        1 &  \text{if}\; x=y^{-1} ,\\
        0 &  \text{else}.\\
       \end{array}
   \right.
   $$

But note that $\tau|_{S_n}$ is just a scalar multiple of the trace function $\vec{t}$ on the Iwahori Hecke algebra of $S_n$ (See e.g. \cite{diaram}[Section 3]). 

\end{proof}

Thus $\tau$ is a trace function on $\mathcal{BMW}_n$ with $\tau(T_xT_y)=0$ for all $x,y\in Br_n- S_n$. In fact, $\tau$ extends the natural trace function of the Hecke algebra, $\mathcal{H}_n$, viewing $\mathcal{H}_n$ as a subalgebra of $\mathcal{BMW}_n$. We analyze $\tilde{K}$ using the bilinear form
arising from $\tau$, which reformulates questions about the time to stationarity in terms of the representation theory of the underlying Hecke subalgebra of $\mathcal{BMW}_n$. 

Recall that $\tilde{K}$ consists of two submatrices corresponding to two communication classes $\mathbf{X}_1$ and $\mathbf{X}_2$ of $K$. Note that for each $T_x\in\in \mathbf{X}_1\cup\mathbf{X}_2$, $x\in Br_n-S_n$. Thus, $\tau(T_xT_y)=0$ for all $T_y\in \mathcal{T}_n$. In order for $\tau$ to be nontrivial on the communication classes of $\tilde{K}$, we rewrite $\tilde{K}$ with respect to a shifted basis for $\mathcal{BMW}_n$. 

\begin{definition}\label{basisshift} Let $\pi$ denote the stationary distribution of $\tilde{K}$. To each $T_x\in\mathbf{X}_1$, associate a distinct $s_x\in S_n$ such that $s_x\neq s_y^{-1}$ for all $T_y\in\mathbf{X_1}$ and $s_x$ has order greater than 2.  For $T_x\in \mathbf{X}_1$ and for $T_y\notin\mathbf{X}_1\cup\mathbf{X}_2$, let
\begin{equation}
\begin{split}
&\hat{T}_x:=T_x+\pi_x(x)^{-\frac{1}{2}}T_{s_x},\\
&\hat{T}_{x^*}:=T_{x^*}+\pi_x(x)^{-\frac{1}{2}}T_{{s_x}^{-1}}=T_{x^*}+\pi_{x^*}(x^*)^{-\frac{1}{2}}T_{{s_x}^{-1}},\\
&\hat{T}_y:=T_y.
\end{split}
\end{equation}

\end{definition}

\begin{note}
 By construction, $\pi_x(x)=\pi_{x^*}(x^*)$ for all $x\in \mathbf{X}_1$.
\end{note}
\begin{note} In Appendix \ref{appcom} we show that $S_n$ contains enough distinct elements to make the associations of Definition \ref{basisshift} for all communication classes corresponding to elements with at least two lower horizontal edges.  The remaining communication classes are analyzed separately through techniques discussed in Appendix \ref{appcom}.
\end{note} 
For the remainder of this section let $\mathbf{X}_1$ be a communication class whose elements contain at least two lower horizontal edges. 

\begin{lemma} $\hat{\mathcal{T}}_n:=\{\hat{T}_x\mid x\in Br_n\}$ is a basis for $\mathcal{BMW}_n$.
\end{lemma}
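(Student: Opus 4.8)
The plan is to show that the change of basis from $\mathcal{T}_n$ to $\hat{\mathcal{T}}_n$ is invertible, which suffices since $\mathcal{T}_n$ is already known to be a basis of $\mathcal{BMW}_n$ by Theorem 3.12 of \cite{hal}. Since both sets have the same cardinality $|Br_n|$, it is enough to exhibit a transition matrix $A$ with $\hat{T}_x = \sum_y A_{xy} T_y$ that is invertible (equivalently, has nonzero determinant, or is injective as a linear map).

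First I would organize the basis $\mathcal{T}_n$ so that the elements $\{T_x \mid x \in \mathbf{X}_1\}$ come first, the elements $\{T_{x^*} \mid x \in \mathbf{X}_1\}$ come second, the ``shift targets'' $\{T_{s_x}, T_{s_x^{-1}} \mid x \in \mathbf{X}_1\}$ come third, and all remaining basis elements come last. The defining relations of Definition \ref{basisshift} then say: $\hat{T}_x = T_x + c_x T_{s_x}$ for $x\in\mathbf{X}_1$ (with $c_x = \pi_x(x)^{-1/2}$), $\hat{T}_{x^*} = T_{x^*} + c_x T_{s_x^{-1}}$, and $\hat{T}_y = T_y$ for all other $y$. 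The key structural facts I would invoke are: (i) for $x \in \mathbf{X}_1$, the elements $x$ and $x^*$ lie in $Br_n - S_n$ (they have lower horizontal edges), whereas each $s_x$ lies in $S_n$, so $\{x, x^* : x\in\mathbf{X}_1\}$ is disjoint from $\{s_x, s_x^{-1} : x\in\mathbf{X}_1\}$; and (ii) by the hypotheses in Definition \ref{basisshift}, the $s_x$ are distinct, $s_x \neq s_y^{-1}$ for all $x,y \in \mathbf{X}_1$, and each $s_x$ has order greater than $2$ so $s_x \neq s_x^{-1}$. Together these guarantee that the map $x \mapsto s_x$ together with $x \mapsto s_x^{-1}$ (ranging over the two copies of $\mathbf{X}_1$) is injective and lands in a set disjoint from $\{x, x^*\}$.

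With this ordering, the transition matrix $A$ is block upper-triangular (in fact a permuted identity-plus-nilpotent): every $\hat{T}$ equals the corresponding $T$ plus possibly one later basis vector, and no basis vector of $\mathcal{T}_n$ is ``hit'' twice as a shift target (by the injectivity in (ii)) nor is any shift target itself one of the shifted elements (by (i) and the order-$>2$ condition, which in particular ensures $T_{s_x} \neq T_{x^*}$-type collisions cannot occur, and also that the two relations for $\hat T_x$ and $\hat T_{x^*}$ add distinct vectors $T_{s_x} \neq T_{s_x^{-1}}$). Hence $A = I + N$ where $N$ is strictly upper triangular after the reordering, so $\det A = 1 \neq 0$ and $A$ is invertible. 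I would then conclude that $\hat{\mathcal{T}}_n$ spans $\mathcal{BMW}_n$ and, having the right cardinality, is a basis.

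The main obstacle — really the only subtle point — is bookkeeping the possible coincidences among the vectors $T_{s_x}$, $T_{s_x^{-1}}$ and $T_x$, $T_{x^*}$ to be sure $N$ is genuinely (permuted) strictly triangular rather than merely upper triangular with a possibly singular diagonal block. All the needed non-coincidences are exactly what the three conditions in Definition \ref{basisshift} (distinctness of the $s_x$, the condition $s_x \neq s_y^{-1}$, and order greater than $2$) were imposed to supply, together with the observation $\{x,x^*\} \subseteq Br_n - S_n$ while $s_x \in S_n$; I would state each of these as a short explicit claim and check it against those conditions, then assemble $A = I+N$ and read off $\det A = 1$.
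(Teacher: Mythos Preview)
Your argument is correct, and in fact the paper states this lemma without proof, so there is nothing to compare against. One simplification: you do not actually need the distinctness conditions on the $s_x$, the condition $s_x\neq s_y^{-1}$, or the order-$>2$ hypothesis for \emph{this} lemma (those are imposed so that Lemma~\ref{innerprodeval} works). All that is needed here is that $\{x,x^*: x\in\mathbf{X}_1\}\subseteq Br_n\setminus S_n$ while each $s_x\in S_n$; from this alone the transition matrix is $A=I+N$ with $N$ supported on rows indexed by $\mathbf{X}_1\cup\mathbf{X}_2$ and columns indexed by $S_n$, hence $N^2=0$ and $A^{-1}=I-N$. So your triangularity bookkeeping, while correct, is more than is required.
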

%\begin{proof} This follows from the fact that $\mathcal{T}_n$ is a basis for $\mathcal{BMW}_n$.
%\end{proof}
Now let $\langle \;,\;\rangle_{\mathcal{BMW}}$ denote the trace form of Section \ref{planchinv}

%$$\langle T_x,T_y\rangle_{\mathcal{BMW}}=\tau(T_xT_y),$$
%for $T_x,T_y\in\mathcal{T}_n$, extended linearly to $\mathcal{BMW}_n$.

\begin{lemma}\label{innerprodeval} For $T_x\in\mathbf{X}_1\cup\mathbf{X}_2$ and $y\in Br_n$, 
$$\langle \hat{T}_x,\hat{T}_y\rangle_{\mathcal{BMW}} = \left\{
     \begin{array}{cl}
        \pi_x(x)^{-1} &  \text{if}\; y=x^* ,\\
        \pi_x(x)^{-\frac{1}{2}} &  \text{if}\; y=(s_x)^{-1},\\
       0 &  \text{else},\\
       \end{array}
   \right. $$
while
$$\langle \hat{T}_{s_x},\hat{T}_y\rangle_{\mathcal{BMW}} = \left\{
     \begin{array}{cl}
        \pi_x(x)^{-\frac{1}{2}} &  \text{if}\; y=x^* ,\\
        1 &  \text{if}\; y=(s_x)^{-1},\\
       0 &  \text{else}.\
       \end{array}
   \right. $$\end{lemma}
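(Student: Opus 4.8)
The plan is to reduce everything to Proposition~\ref{oftenzero}, which tells us that $\tau(T_aT_b)=1$ exactly when $a=b^{-1}$ (both in $S_n$) and $0$ otherwise, and hence that $\langle T_a, T_b\rangle_{\mathcal{BMW}} = \delta_{a,b^{-1}}$ on the monoid basis. Since $\langle\,,\,\rangle_{\mathcal{BMW}}$ is bilinear, I would simply expand each $\hat T_x$, $\hat T_y$ according to Definition~\ref{basisshift} and collect terms. Concretely, for $T_x\in\mathbf{X}_1$ write $\hat T_x = T_x + \pi_x(x)^{-1/2}T_{s_x}$; since $x\in Br_n-S_n$, the element $T_x$ has no inverse, so $\langle T_x, T_z\rangle_{\mathcal{BMW}}=\tau(T_xT_z)=0$ for every $z\in Br_n$ by Proposition~\ref{oftenzero}. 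Thus the only surviving contribution from $\hat T_x$ comes from the $T_{s_x}$ summand, and $\langle \hat T_x,\hat T_y\rangle_{\mathcal{BMW}} = \pi_x(x)^{-1/2}\langle T_{s_x},\hat T_y\rangle_{\mathcal{BMW}}$. The case $T_x\in\mathbf{X}_2$ (i.e. $x=w^*$ for some $w\in\mathbf{X}_1$) is identical, using $\hat T_{x^*}=T_{x^*}+\pi_{x^*}(x^*)^{-1/2}T_{s_x^{-1}}$ and $\pi_x(x)=\pi_{x^*}(x^*)$.

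Next I would expand $\hat T_y$. If $y\notin\mathbf{X}_1\cup\mathbf{X}_2$ then $\hat T_y=T_y$, and $\langle T_{s_x},T_y\rangle_{\mathcal{BMW}}=\tau(T_{s_x}T_y)$, which is nonzero only if $y=s_x^{-1}\in S_n$; so the ``else'' case contributes $0$, and if $y=s_x^{-1}$ we get $\langle\hat T_x,\hat T_y\rangle_{\mathcal{BMW}}=\pi_x(x)^{-1/2}\cdot 1$, as claimed. If instead $y\in\mathbf{X}_1$, say $y=z$ with $\hat T_z=T_z+\pi_z(z)^{-1/2}T_{s_z}$, then $\langle T_{s_x},\hat T_z\rangle_{\mathcal{BMW}} = \tau(T_{s_x}T_z) + \pi_z(z)^{-1/2}\tau(T_{s_x}T_{s_z})$; the first term is $0$ because $z\in Br_n-S_n$ has no inverse, and the second is $\pi_z(z)^{-1/2}\delta_{s_x,s_z^{-1}}$, which vanishes because the association in Definition~\ref{basisshift} was chosen so that $s_x\neq s_y^{-1}$ for all $T_y\in\mathbf{X}_1$. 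If $y=z^*\in\mathbf{X}_2$, then $\langle T_{s_x},\hat T_{z^*}\rangle_{\mathcal{BMW}} = \tau(T_{s_x}T_{z^*}) + \pi_z(z)^{-1/2}\tau(T_{s_x}T_{s_z^{-1}})$; again the first term is $0$, and the second equals $\pi_z(z)^{-1/2}\delta_{s_x, s_z} = \pi_x(x)^{-1/2}\delta_{x^*,y}$ (using that $s_x\mapsto x$ is injective on $\mathbf{X}_1$). Multiplying through by the leading $\pi_x(x)^{-1/2}$ yields the value $\pi_x(x)^{-1}$ precisely when $y=x^*$, and $0$ otherwise — exactly the first displayed formula. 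The case $y=(s_x)^{-1}$ is the one already handled, and one checks the orders-greater-than-$2$ condition guarantees $s_x^{-1}\neq s_x$ so there is no collision between the two nonzero cases.

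For the second formula, $\langle\hat T_{s_x},\hat T_y\rangle_{\mathcal{BMW}}$, I would note that $\hat T_{s_x}=T_{s_x}$ since $s_x\in S_n$ is not one of the shifted indices, so this is just $\langle T_{s_x},\hat T_y\rangle_{\mathcal{BMW}}$, which is the quantity computed in the previous paragraph before multiplying by $\pi_x(x)^{-1/2}$. Hence it equals $\pi_x(x)^{-1/2}$ when $y=x^*$, equals $1$ when $y=s_x^{-1}$, and $0$ otherwise.

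The only genuinely delicate point — the rest is bookkeeping — is making sure the two nonzero cases in each formula are distinct and that no unintended coincidence among the various $s_x, s_x^{-1}, x, x^*$ produces an extra term. This is exactly what the defining properties of the association in Definition~\ref{basisshift} (distinctness of the $s_x$, the condition $s_x\neq s_y^{-1}$, and order $>2$) are there to rule out, together with the fact that elements of $\mathbf{X}_1\cup\mathbf{X}_2$ lie in $Br_n-S_n$ and so are killed by $\tau$ against everything. I would state these incompatibilities explicitly at the start of the proof and then let the bilinear expansion run.
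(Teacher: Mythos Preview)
Your proposal is correct and follows exactly the approach the paper indicates: the paper's proof is the single sentence ``Follows from Proposition~\ref{oftenzero} and the linearity of trace,'' and your write-up is a careful unpacking of precisely that. Your explicit use of the conditions in Definition~\ref{basisshift} (distinctness of the $s_x$, the requirement $s_x\neq s_y^{-1}$, and order greater than $2$) to rule out spurious cross-terms is exactly the bookkeeping the one-line proof is suppressing.
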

\begin{proof} Follows from Proposition \ref{oftenzero} and the linearity of trace.
\end{proof}
Let $\hat{K}$ be the matrix of $\tilde{K}$ with respect to $\hat{\mathcal{T}}_n$. Note that time to stationarity is invariant under change of basis.

\begin{lemma} \label{Khatstruct} For $\hat{T}_x\in\hat{\mathcal{T}}_n$,
\begin{enumerate}
\item If $\hat{T}_x\in\hat{\mathbf{X}}_1$, 
$$ \hat{K}(\hat{T}_x,\hat{T}_y) = \left\{
     \begin{array}{ll}
     K(T_x,T_y) &  \text{if}\;\hat{T}_y\in\hat{\mathbf{X}}_1\cup\hat{\mathbf{X}}_2,\\
     (1-K(T_x,T_x))\pi_x(x)^{-\frac{1}{2}}&\text{if}\; y=s_x,\\
     -K(T_x,T_y)\pi_y(y)^{-\frac{1}{2}} &  \text{if}\;y=s_z, z\neq x, z\in\mathbf{X}_1\\
       0& \text{else}, \\ 
      \end{array}
   \right.
$$ and similarly for $\hat{T}_x\in\hat{\mathbf{X}}_2$.
\item If $\hat{T}_x\notin\hat{\mathbf{X}}_1\cup\hat{\mathbf{X}}_2$, 
$$ \hat{K}(\hat{T}_x,\hat{T}_y) = \left\{
     \begin{array}{ll}
     1 &  \text{if}\;y=x,\\
      0& \text{else}. \\ 
      \end{array}
   \right.
$$

\end{enumerate}
\end{lemma}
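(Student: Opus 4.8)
The plan is to compute $\hat{K}(\hat{T}_x,\hat{T}_y)$ directly from its definition as the matrix of $\tilde{K}$ in the shifted basis, that is, by expanding $\tilde{K}(\hat{T}_x)$ and re-expressing the result in terms of the $\hat{T}_y$. The key algebraic input is that, for $T_x \in \mathbf{X}_1$, the shifted basis vector is $\hat{T}_x = T_x + \pi_x(x)^{-1/2}T_{s_x}$, where $s_x \in S_n$ has order $>2$ and the $s_x$ are chosen distinct and never inverses of one another; in particular the $T_{s_x}$ (and $T_{s_x^{-1}}$) are honest basis elements of $\mathcal{T}_n$ fixed by $\tilde{K}$ (they lie outside $\mathbf{X}_1\cup\mathbf{X}_2$ since they are permutations, not elements with lower horizontal edges), so $\tilde{K}(T_{s_x}) = T_{s_x}$.

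For case (1), take $\hat{T}_x \in \hat{\mathbf{X}}_1$. First I would apply $\tilde{K}$: by linearity $\tilde{K}(\hat{T}_x) = \tilde{K}(T_x) + \pi_x(x)^{-1/2}\tilde{K}(T_{s_x}) = \tilde{K}(T_x) + \pi_x(x)^{-1/2}T_{s_x}$. Now $\tilde{K}(T_x) = \sum_{T_y \in \mathbf{X}_1} K(T_x,T_y)T_y$ since $\tilde{K}$ restricted to $\mathbf{X}_1$ agrees with $K$ (the class is closed, so no mass escapes). The point is then to rewrite each $T_y$ appearing here: for $y\in\mathbf{X}_1$, $T_y = \hat{T}_y - \pi_y(y)^{-1/2}T_{s_y}$, so $\sum_y K(T_x,T_y)T_y = \sum_y K(T_x,T_y)\hat{T}_y - \sum_y K(T_x,T_y)\pi_y(y)^{-1/2}T_{s_y}$. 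Combining with the leftover $\pi_x(x)^{-1/2}T_{s_x}$ term and noting $T_{s_y} = \hat{T}_{s_y}$, the coefficient of $\hat{T}_{s_x}$ becomes $\pi_x(x)^{-1/2} - K(T_x,T_x)\pi_x(x)^{-1/2} = (1-K(T_x,T_x))\pi_x(x)^{-1/2}$, the coefficient of $\hat{T}_{s_z}$ for $z\ne x$ in $\mathbf{X}_1$ is $-K(T_x,T_z)\pi_z(z)^{-1/2}$, the coefficient of $\hat{T}_y$ for $y\in\mathbf{X}_1$ is $K(T_x,T_y)$, and all other coefficients vanish — exactly the claimed formula. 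The $\hat{\mathbf{X}}_2$ case is identical after replacing $s_x$ by $s_x^{-1}$ and using the second equality in Definition \ref{basisshift}, $\hat{T}_{x^*} = T_{x^*} + \pi_{x^*}(x^*)^{-1/2}T_{s_x^{-1}}$, together with the Note that $\pi_x(x) = \pi_{x^*}(x^*)$.

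For case (2), if $\hat{T}_x \notin \hat{\mathbf{X}}_1 \cup \hat{\mathbf{X}}_2$ then $\hat{T}_x = T_x$ with $x \notin \mathbf{X}_1\cup\mathbf{X}_2$; if moreover $x$ is not one of the chosen $s_z$ or $s_z^{-1}$, then $\tilde{K}(T_x) = T_x$ by definition of $\tilde{K}$, giving the identity row. The one subtlety is that some $T_x$ with $\hat{T}_x = T_x$ could coincide with a $T_{s_z}$: but $T_{s_z}$ is still fixed by $\tilde{K}$ (it is a permutation state outside both classes), so $\tilde{K}(T_{s_z}) = T_{s_z} = \hat{T}_{s_z}$, and again the row is the standard basis vector $e_x$. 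The main obstacle — really just bookkeeping rather than a genuine difficulty — is keeping the index sets straight: verifying that no $T_{s_x}$ or $T_{s_x^{-1}}$ accidentally lies in $\mathbf{X}_1\cup\mathbf{X}_2$ (guaranteed since those classes consist of tangles with lower horizontal edges while $s_x\in S_n$), and that the substitution $T_y \mapsto \hat{T}_y - \pi_y(y)^{-1/2}T_{s_y}$ is applied consistently so the $T_{s_x}$ and $T_{s_z}$ contributions are collected into the right $\hat{T}_{s_z}$ slots. Once the substitution is tracked carefully, matching coefficients yields the stated piecewise formula.
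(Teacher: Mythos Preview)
Your proposal is correct and is precisely the computation the paper has in mind: the paper's own proof is the single line ``Follows from definition of $\hat{K}$ and $\hat{\mathcal{T}}_n$,'' and what you have written is exactly the expansion of $\tilde{K}(\hat{T}_x)$ in the shifted basis that this sentence summarizes. Your careful tracking of the $T_{s_y}$ contributions (and the observation that $T_{s_x}\notin\mathbf{X}_1\cup\mathbf{X}_2$ since permutations have no lower horizontal edges) fills in the details the paper leaves implicit.
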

\begin{proof} Follows from definition of $\hat{K}$ and $\hat{\mathcal{T}}_n$.
\end{proof}

Lemma \ref{Khatstruct} shows that $\hat{K}$ is a direct sum $\hat{K}_1\bigoplus\hat{K}_2\bigoplus I_{\hat{m}}$, where for $i=1,2$, the matrix $\hat{K}_i$ corresponds to $\{\hat{T}_x,\hat{T}_{s_x}\mid T_x\in\mathbf{X}_i\}$,  and $\hat{m}=|\mathcal{T}_n|-4|\mathbf{X}_1|$. Further,  
$$\hat{K}(\hat{T}_x,\hat{T}_y)=\tilde{K}(T_x,T_y)=K(T_x,T_y),$$
for all $T_x,T_y\in\mathbf{X}_1\cup\mathbf{X}_2$.

Recall that $\pi$ denotes the stationary distribution of $\tilde{K}$. For $T_x\notin\mathbf{X}_1\cup\mathbf{X}_2$, $\pi_x(y)=0$ for all $T_y\neq T_x$, and so  \begin{equation}\label{eqn1}[\pi]_x=T_x.\end{equation} 
Further, for $T_x\in\mathbf{X}_1$, $\pi_x(y)=0$ for all $T_y\notin\mathbf{X}_1$, and so
\begin{equation}\label{eqn2}[\pi]_{x}=\sum_{T_y\in\mathbf{X}_1} \pi_{x}(y)T_{y},\end{equation}
and similarly for $\mathbf{X}_2$.
%Finally, note that $\pi_x(y)=\pi_{x^*}(y)$ for all $x,x^*,y\in\mathbf{X}_1$, and similarly for $\mathbf{X}_2$.

Let $\hat{\pi}$ denote the stationary distribution of $\hat{K}$ and $[\hat{\pi}]_x$ the stationary distribution of $\hat{K}$ corresponding to column $\hat{T}_x$. Let $\hat{\mathbf{X}}_i=\{\hat{T}_x\mid T_x\in \mathbf{X}_i\}$. 

\begin{lemma}\label{khatstat} Let $\pi$ be the stationary distribution of $\tilde{K}$ and $\hat{\pi}$ the stationary distribution of $\hat{K}$.
\begin{enumerate}
\item  For $\hat{T}_{x}\in\hat{\mathbf{X}}_1$,
$$ [\hat{\pi}]_{x} = \sum_{\hat{T}_y\in\mathbf{X}_1}(\pi_x(y)\hat{T}_y-\pi_x(y)^{\frac{1}{2}}\hat{T}_{s_y})+\pi_x(x)^{-\frac{1}{2}}\hat{T}_{s_x},$$

and similarly for $\hat{T}_{x}\in\hat{\mathbf{X}}_2$.
\item If $\hat{T}_y\notin \hat{\mathbf{X}}_1\cup\hat{\mathbf{X}}_2$,  $[\hat{\pi}]_y= \hat{T}_y$
\end{enumerate}
\end{lemma}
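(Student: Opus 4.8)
The plan is to verify directly that the claimed vectors $[\hat\pi]_x$ are fixed by left multiplication by $\hat K$, and then invoke uniqueness of the stationary distribution on each closed communication class. Since Lemma~\ref{Khatstruct} shows $\hat K = \hat K_1 \oplus \hat K_2 \oplus I_{\hat m}$, it suffices to treat the three blocks separately. Part (2) is immediate: if $\hat T_y \notin \hat{\mathbf{X}}_1 \cup \hat{\mathbf{X}}_2$, then by Lemma~\ref{Khatstruct}(2) the row of $\hat K$ at $\hat T_y$ is the standard basis vector, so $\hat T_y$ is visibly a fixed column and $[\hat\pi]_y = \hat T_y$.

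For part (1), first I would translate between the known stationary data of $\tilde K$ and the sought stationary data of $\hat K$ using the change-of-basis formulas in Definition~\ref{basisshift}. Recall from \eqref{eqn2} that $[\pi]_x = \sum_{T_y \in \mathbf{X}_1} \pi_x(y) T_y$ for $T_x \in \mathbf{X}_1$, and that $\hat T_y = T_y + \pi_y(y)^{-1/2} T_{s_y}$ while $\hat T_{s_y} = T_{s_y}$. Solving, $T_{s_y} = \hat T_{s_y}$ and $T_y = \hat T_y - \pi_y(y)^{-1/2}\hat T_{s_y}$. The plan is to start from the ansatz that $[\hat\pi]_x$, expressed in the old basis $\{T_\bullet\}$, should be the honest limiting distribution of the chain written in the shifted basis — i.e. one computes $\lim_m \hat K^m$ acting on the column $\hat T_x$. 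Substituting $\hat T_x = T_x + \pi_x(x)^{-1/2}T_{s_x}$ and using that $T_{s_x}$ (an element of $S_n$ of order $>2$, chosen outside $\mathbf{X}_1 \cup \mathbf{X}_2$ and with $s_x \neq s_y^{-1}$) lies in a trivial fixed block of $\tilde K$... but wait — $T_{s_x}$ is \emph{not} in general fixed by $\tilde K$; rather it is $\hat T_{s_x}$ that may fail to be fixed. The cleaner route is to work entirely in the $\hat{\mathcal T}_n$ basis: take the candidate vector
$$
v_x := \sum_{\hat T_y \in \hat{\mathbf{X}}_1}\bigl(\pi_x(y)\hat T_y - \pi_x(y)^{1/2}\hat T_{s_y}\bigr) + \pi_x(x)^{-1/2}\hat T_{s_x},
$$
and check $\hat K v_x = v_x$ by applying Lemma~\ref{Khatstruct}(1) row by row. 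For a row indexed by some $\hat T_z \in \hat{\mathbf{X}}_1$, the entries of $\hat K$ hitting the $\hat{\mathbf{X}}_1$-components reproduce $\sum_w K(T_z,T_w)\pi_x(w)$, which equals $\pi_x(z)$ since $\pi$ is $\tilde K$-stationary and $K$ agrees with $\tilde K$ on $\mathbf{X}_1$; one then checks the $\hat T_{s_w}$-coefficients match using the two "off-diagonal" cases $(1-K(T_z,T_z))\pi_z(z)^{-1/2}$ and $-K(T_z,T_w)\pi_w(w)^{-1/2}$ of Lemma~\ref{Khatstruct}(1), together with the identity $\sum_w K(T_z,T_w)\pi_x(w)^{1/2}\cdot(\text{stuff})$ collapsing via stationarity. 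For a row indexed by $\hat T_{s_z}$ with $z \in \mathbf{X}_1$, Lemma~\ref{Khatstruct}(1) gives $\hat K(\hat T_{s_z}, \cdot)$ supported only on $\hat T_{s_z}$ with value $1$ (since $s_z$ is chosen so that no other basis vector maps to it), so the $\hat T_{s_z}$-coefficient is automatically preserved.

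The main obstacle I expect is the bookkeeping around the $\hat T_{s_y}$ coefficients: one must carefully confirm that the "correction terms" $-\pi_x(y)^{1/2}\hat T_{s_y}$ summed over $y$, plus the single extra term $\pi_x(x)^{-1/2}\hat T_{s_x}$, are exactly reproduced under $\hat K$, which amounts to the algebraic identity
$$
\sum_{T_w \in \mathbf{X}_1} K(T_z,T_w)\,\pi_x(w)^{1/2}\,\pi_w(w)^{-1/2} \;=\; \pi_x(z)^{1/2}\pi_z(z)^{-1/2}
$$
— and this in turn should follow by applying $\sqrt{\pi_z(z)/\pi_w(w)}$-type detailed-balance relations for the reversible chain $\tilde K$ restricted to $\mathbf{X}_1$, i.e. $\pi_w(w)K(T_w,T_z)=\pi_z(z)K(T_z,T_w)$ (noting the $\pi_\bullet(\bullet)$ here play the role of the reversible stationary weights on $\mathbf{X}_1$). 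Once the coefficient identities are nailed down, fixedness gives $v_x = c\,[\hat\pi]_x$ for the appropriate normalization; checking $c=1$ is a one-line count of the $\hat T_x$-coefficient. Uniqueness of the stationary distribution on the (single, closed, finite, irreducible) communication class then finishes the argument, and the $\hat{\mathbf{X}}_2$ case is verbatim the same with $s_x$ replaced by $s_x^{-1}$ and $x$ by $x^*$.
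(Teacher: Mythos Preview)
Your approach differs from the paper's and contains a genuine gap at the uniqueness step. The paper does not verify fixedness and then invoke uniqueness; it simply observes that $\hat K$ is the matrix of the \emph{same} linear operator as $\tilde K$, written in the basis $\hat{\mathcal T}_n$. Consequently $[\hat\pi]_x$, the limiting column at $\hat T_x$, is the image under the limit operator of $\hat T_x = T_x + \pi_x(x)^{-1/2}T_{s_x}$, namely
\[
[\hat\pi]_x \;=\; [\pi]_x + \pi_x(x)^{-1/2}[\pi]_{s_x}.
\]
By \eqref{eqn1} and \eqref{eqn2} this equals $\sum_{T_y\in\mathbf X_1}\pi_x(y)T_y + \pi_x(x)^{-1/2}T_{s_x}$, and substituting $T_y=\hat T_y-\pi_y(y)^{-1/2}\hat T_{s_y}$ gives the claimed formula in three lines. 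The only identity used is $\pi_x(y)\pi_y(y)^{-1/2}=\pi_x(y)^{1/2}$, which holds because $\pi_x(y)=\pi_y(y)$: the stationary distribution on the irreducible class $\mathbf X_1$ does not depend on the starting state. You actually gestured at exactly this route (``one computes $\lim_m\hat K^m$ acting on the column $\hat T_x$'') before abandoning it.

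The route you pursued instead breaks down. The block $\hat K_1$ is \emph{not} a Markov matrix---by Lemma~\ref{Khatstruct}(1) its rows contain negative entries and do not sum to $1$---so there is no probabilistic uniqueness of stationary distributions to invoke. Even as pure linear algebra, $\hat K_1$ has a large eigenvalue-$1$ eigenspace: Lemma~\ref{Khatstruct}(2) shows every $\hat T_{s_z}$ is itself fixed, giving at least $|\mathbf X_1|$ independent fixed vectors besides your candidate $v_x$. So establishing that $v_x$ is fixed cannot by itself identify $[\hat\pi]_x$; you would still need to show that iterates starting from $\hat T_x$ converge to $v_x$ rather than to some other fixed vector, and at that point you are essentially redoing the paper's change-of-basis argument. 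As an aside, your ``main obstacle'' identity collapses immediately once you use $\pi_x(w)=\pi_w(w)$: both sides equal $1$, with no appeal to detailed balance required.
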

\begin{proof} Part (2) follows from Lemma \ref{Khatstruct}. To prove (1), note that for $\hat{T}_x\in\hat{\mathbf{X}}_1$, $[\hat{\pi}]_{x}=[\pi]_{x}+\pi_{x}(x)^{-\frac{1}{2}}[\pi]_{s_{x}}$. Then by equations (\ref{eqn1}) and (\ref{eqn2}),
\begin{equation*}
\begin{split}
[\hat{\pi}]_{x}&=[\pi]_{x}+\pi_{x}(x)^{-\frac{1}{2}}[\pi]_{s_{x}}\\
&=\sum_{T_{y}\in\mathbf{X}_1} \pi_{x}(y)T_{y}+\pi_{x}(x)^{-\frac{1}{2}}T_{s_{x}}\\
&=\sum_{T_{y}\in\mathbf{X}_1} \left( \pi_{x}(y)(T_{y}+\pi_{y}(y)^{-\frac{1}{2}}T_{s_{y}})-\pi_{x}(y)\pi_y(y)^{-\frac{1}{2}}T_{s_{y}}\right)+\pi_{x}(x)^{-\frac{1}{2}}T_{s_{x}}.\\
&=\sum_{\hat{T}_{y}\in\hat{\mathbf{X}}_1 } \left( \pi_{x}(y)\hat{T}_{y}-\pi_{x}(y)^{\frac{1}{2}}\hat{T}_{s_{y}}\right)+\pi_{x}(x)^{-\frac{1}{2}}\hat{T}_{s_{x}}.
\end{split}
\end{equation*}
\end{proof}

For $T_x\in\mathbf{X}_1\cup\mathbf{X}_2$, Lemma \ref{khatstat} shows that $\pi_x(y)=\hat{\pi}_x(y)$ for all $T_{y}\in\mathbf{X}_1\cup\mathbf{X}_2$. However, $\pi_x(s_{y})=0$, but $\hat{\pi}_{x}(s_{x})=\pi_{x}(x)^{-\frac{1}{2}}-\pi_{x}(x)^{\frac{1}{2}}$ and for $y\neq x$, $\hat{\pi}_{x}(s_{y})=-\pi_{x}(y)^{\frac{1}{2}}$. 

Let $\hat{\mathcal{S}}:=\{\hat{T}_{s_x}\mid s_x\in\mathcal{S}\}$. Consider  the $L^2(\hat{\pi})$-norm restricted to the subspace generated by $\hat{\mathbf{X}}_1\cup \hat{\mathbf{X}}_2\cup\hat{\mathcal{S}}$:
\begin{definition} For functions $f,g:\hat{\mathbf{X}}_1\cup\hat{\mathbf{X}}_2\cup \hat{\mathcal{S}}\rightarrow\mathbb{C}$, let 
$$\langle f,g\rangle_2:=\sum_{\hat{T}_x\in\hat{\mathbf{X}}_1\cup\hat{\mathbf{X}}_2\cup\hat{\mathcal{S}}} f(x)g(x)\hat{\pi}_x(x).$$
 \end{definition}

For $m\in\mathbb{N}$, let $[\hat{K}^m]_x$ denote the column of $\hat{K}^m$ corresponding to $\hat{T}_x$:
$$[\hat{K}^n]_x=\sum_{{\hat{T}_{x_i}}\in\hat{\mathbf{X}}_1\cup\hat{\mathbf{X}}_2\cup\hat{\mathcal{S}}}K_x^n(x_i)\hat{T}_{x_i}.$$ 
To find the time to stationarity of $\hat{K}$ (and hence $\tilde{K}$ and $K$), we analyze $\| [\hat{K}^m]_x-[\hat{\pi}]_x\|_2$.

\begin{lemma}\label{translateinnerprod} Let $f,g$ be complex-valued functions on $\hat{\mathbf{X}}_1\cup\hat{\mathbf{X}}_2\cup\hat{\mathcal{S}}$ and let $*:\hat{\mathbf{X}}_1\cup\hat{\mathbf{X}}_2\cup\hat{\mathcal{S}}\rightarrow\hat{\mathbf{X}}_1\cup\hat{\mathbf{X}}_2\cup\hat{\mathcal{S}}$ be the involution that sends $\hat{T}_x$ to $\hat{T}_{x^*}$ for $\hat{T}_x\in\hat{\mathbf{X}}_1\cup\hat{\mathbf{X}}_2$, and $\hat{T}_{s_x}$ to $\hat{T}_{s_x^{-1}}$ for $\hat{T}_{s_x}\in\hat{\mathcal{S}}$. Then for $\langle\;,\;\rangle_{BMW}$ the bilinear form arising from the trace $\tau$,
$$\langle f/\hat{\pi},g/\hat{\pi}\rangle_2=\langle f,g^*\rangle_{BMW}-\sum_{\hat{T}_{s_x}\in\hat{\mathcal{S}}}\frac{f(x)g(s_x^{-1})+f(s_x^{-1})g(x^*)}{\hat{\pi}_x(x)^{\frac{1}{2}}}.$$
%+\sum_{s_x} \frac{1-\hat{\pi}(s_x)}{\hat{\pi}(s_x)}f(s_x)g(s_x).$$
\end{lemma}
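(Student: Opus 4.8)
The plan is to expand both sides of the identity in the shifted basis $\hat{\mathcal{T}}_n$ and match them, regarding $f$ and $g$ as the elements $\sum f(x)\hat{T}_x$ and $\sum g(x)\hat{T}_x$ of $\mathcal{BMW}_n$ (the sums over $\hat{T}_x\in\hat{\mathbf{X}}_1\cup\hat{\mathbf{X}}_2\cup\hat{\mathcal{S}}$) and $g^{*}$ as $g$ precomposed with the involution $*$. Two inputs drive everything. The first is Lemma \ref{innerprodeval}, which is exactly the Gram table of $\hat{\mathcal{T}}_n$ for $\langle\;,\;\rangle_{BMW}$: for each $T_w\in\mathbf{X}_1$ the only nonzero pairings are $\langle\hat{T}_w,\hat{T}_{w^{*}}\rangle_{BMW}=\pi_w(w)^{-1}$, the cross pairings $\langle\hat{T}_w,\hat{T}_{s_w^{-1}}\rangle_{BMW}=\langle\hat{T}_{w^{*}},\hat{T}_{s_w}\rangle_{BMW}=\pi_w(w)^{-1/2}$, and $\langle\hat{T}_{s_w},\hat{T}_{s_w^{-1}}\rangle_{BMW}=1$; in particular every diagonal Gram entry vanishes, which is where the hypothesis that $s_x$ has order greater than $2$ enters (via Proposition \ref{oftenzero}, since then $\tau(T_{s_x}^2)=0$). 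The second input is the diagonal of the stationary matrix: $\hat{\pi}_x(x)=\pi_x(x)$ for $\hat{T}_x\in\hat{\mathbf{X}}_1\cup\hat{\mathbf{X}}_2$ (from the discussion following Lemma \ref{khatstat}) and $\hat{\pi}_{s_x}(s_x)=1$ for $\hat{T}_{s_x}\in\hat{\mathcal{S}}$, the latter because $\hat{T}_{s_x}=T_{s_x}$ with $s_x\in S_n$ lies outside $\hat{\mathbf{X}}_1\cup\hat{\mathbf{X}}_2$, so Lemma \ref{khatstat} gives $[\hat{\pi}]_{s_x}=\hat{T}_{s_x}$.

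First I would evaluate the right-hand side. By bilinearity of $\langle\;,\;\rangle_{BMW}$, $\langle f,g^{*}\rangle_{BMW}=\sum_{x,y}f(x)g(y^{*})\langle\hat{T}_x,\hat{T}_y\rangle_{BMW}$, and the finite list above collapses this to a sum over $T_w\in\mathbf{X}_1$. Substituting the values of $g^{*}$ on the relevant basis vectors ($g^{*}(w^{*})=g(w)$, $g^{*}(s_w^{-1})=g(s_w)$, and so on), the contribution of the pairs $\{\hat{T}_w,\hat{T}_{w^{*}}\}$ and $\{\hat{T}_{s_w},\hat{T}_{s_w^{-1}}\}$ becomes precisely $\pi_w(w)^{-1}(f(w)g(w)+f(w^{*})g(w^{*}))+f(s_w)g(s_w)+f(s_w^{-1})g(s_w^{-1})$, which is the $w$-summand of $\langle f/\hat{\pi},g/\hat{\pi}\rangle_2$ read off from its definition and the diagonal of $\hat{\pi}$. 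So the "diagonal part" of $\langle f,g^{*}\rangle_{BMW}$ is exactly $\langle f/\hat{\pi},g/\hat{\pi}\rangle_2$, and what remains is the contribution of the off-diagonal cross pairs $\{\hat{T}_w,\hat{T}_{s_w^{-1}}\}$ and $\{\hat{T}_{w^{*}},\hat{T}_{s_w}\}$.

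The last step is bookkeeping: reindex this residual sum so that each summand is attached to the element $\hat{T}_{s_x}\in\hat{\mathcal{S}}$ carrying the factor $\pi_x(x)^{-1/2}$ occurring in it — using that the shift associated to $w^{*}\in\mathbf{X}_2$ is $s_w^{-1}=s_{w^{*}}$ and that $\hat{\pi}_{w^{*}}(w^{*})=\pi_w(w)$ — which rewrites it as the sum over $\hat{\mathcal{S}}$ appearing in the statement. Since every step is bilinear in $(f,g)$, it suffices to check the identity on basis functions, so the whole argument reduces to the finite Gram table of Lemma \ref{innerprodeval} together with the two values of $\hat{\pi}_x(x)$.

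The only genuinely non-formal point — and the conceptual content of the lemma — is that the $\langle\,\cdot\,,(\cdot)^{*}\rangle_{BMW}$-form and the $L^2(\hat{\pi})$-form already agree on the diagonal: $\langle\hat{T}_x,\hat{T}_{x^{*}}\rangle_{BMW}=\hat{\pi}_x(x)^{-1}$ for $\hat{T}_x\in\hat{\mathbf{X}}_1\cup\hat{\mathbf{X}}_2$, and $\langle\hat{T}_{s_x},\hat{T}_{s_x^{-1}}\rangle_{BMW}=1=\hat{\pi}_{s_x}(s_x)^{-1}$. This matching is exactly what the shift construction of Definition \ref{basisshift}, with its normalizing factors $\pi_x(x)^{-1/2}$, was built to produce, and it is immediate from Lemma \ref{innerprodeval} and Lemma \ref{khatstat}. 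I expect the main obstacle to be purely organizational: keeping the several involutions straight — $w\leftrightarrow w^{*}$ on tangles, $s\leftrightarrow s^{-1}$ on $S_n$, and their interaction through the $\mathbf{X}_1$–$\mathbf{X}_2$ pairing — so that the off-diagonal terms are collected under the correct index $\hat{T}_{s_x}$ and the correct power of $\pi_x(x)$.
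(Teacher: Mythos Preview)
Your proposal is correct and follows essentially the same approach as the paper: both arguments reduce the identity to the Gram table of Lemma~\ref{innerprodeval} together with the key observation that $\langle \hat{T}_x,(\hat{T}_x)^*\rangle_{BMW}=\hat{\pi}_x(x)^{-1}$ on all of $\hat{\mathbf{X}}_1\cup\hat{\mathbf{X}}_2\cup\hat{\mathcal{S}}$. The only difference is direction---the paper starts from $\langle f/\hat{\pi},g/\hat{\pi}\rangle_2$, rewrites the diagonal sum via this identity, and then completes it to the full double sum $\langle f,g^*\rangle_{BMW}$ by adding and subtracting the off-diagonal cross terms, whereas you start from $\langle f,g^*\rangle_{BMW}$ and peel off the diagonal---but the content is identical.
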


\begin{proof} By Lemma \ref{innerprodeval}, 
\begin{equation*}
\begin{split}
\langle f/\hat{\pi},g/\hat{\pi}\rangle_2&= \sum \frac{f(x)g(x)}{\hat{\pi}_x(x)}\\
%&=\sum_{\hat{T}_x\in \hat{\mathbf{X}}_1\cup\hat{\mathbf{X}}_2} f(x)g(x)\langle \hat{T}_x, \hat{T}_{x^*}\rangle_{BMW}+ \sum_{s_x\in\mathcal{S}} \frac{f(s_x)g(s_x)}{\hat{\pi}_{s_x}(s_x)}\\
%&=\sum_{\hat{T}_x\in \hat{\mathbf{X}}_1\cup\hat{\mathbf{X}}_2} f(x)g(x)\langle \hat{T}_x, \hat{T}_{x^*}\rangle_{BMW}+\sum_{s_x\in\mathcal{S}} f(s_x)g(s_x)\langle \hat{T}_{s_x}, \hat{T}_{s_x^{-1}}\rangle_{BMW}\\
&=\sum_{\hat{T}_x\in \hat{\mathbf{X}}_1\cup\hat{\mathbf{X}}_2\cup\hat{\mathcal{S}}} f(x)g(x)\langle \hat{T}_x, (\hat{T}_{x})^*\rangle_{BMW}\\
&=\sum_{\hat{T}_x,\hat{T}_y\in \hat{\mathbf{X}}_1\cup\hat{\mathbf{X}}_2\cup\hat{\mathcal{S}}} f(x)g(y)\langle \hat{T}_x, (\hat{T}_{y})^*\rangle_{BMW}-\sum_{\hat{T}_{s_x}\in\hat{\mathcal{S}}}\frac{f(x)g(s_x^{-1})}{\hat{\pi}_x(x)^{\frac{1}{2}}}\\
&\;\;\;\;\;\;\;\;-\sum_{\hat{T}_{s_x}\in\hat{\mathcal{S}}} \frac{f(s_x^{-1})g(x^*)}{\hat{\pi}_x(x)^{\frac{1}{2}}}\\
&=\langle f,g^*\rangle_{BMW}-\sum_{\hat{T}_{s_x}\in\hat{\mathcal{S}}}\frac{f(x)g(s_x^{-1})+f(s_x^{-1})g(x^*)}{\hat{\pi}_x(x)^{\frac{1}{2}}}.
\end{split}
\end{equation*}
\end{proof}
%**NOTE below $K_{x+x^*}/\pi_{x+x^*}$ needs to be clarified because as we all know, can't just say that is $K_x/\pi_x+K_{x^*}/\pi_{x^*}$, but really thats what we mean oof***

\begin{corollary} For $\hat{T}_x\in\hat{\mathbf{X}}_1$, 
$$\langle [\hat{K}^m/\hat{\pi}]_{x},[\hat{K}^m/\hat{\pi}]_{x}\rangle_2=\langle [\hat{K}^m]_{x},[\hat{K}^m]_{x}\rangle_{BMW}-\sum_{\hat{T}_y\in\mathbf{X}_2}\frac{\hat{K}^m_{x}(s_y^{-1})\hat{K}^m_{x}(y^*)}{\hat{\pi}_{y}(y)^{\frac{1}{2}}}.$$
%+\sum_{s_{x_i}} \frac{1-\pi_{x+x^*}(s_{x_i})}{\pi_{x+x^*}(s_{x_i})}\left(\hat{K}^n_{x+x^*}(s_{x_i})\right)^2.$$
\end{corollary}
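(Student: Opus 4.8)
The plan is to read the corollary off Lemma~\ref{translateinnerprod} by specializing $f=g=[\hat K^m]_x$, and then to collapse the resulting correction sum using the block structure of $\hat K$. Taking $f=g=[\hat K^m]_x$, viewed as a function on $\hat{\mathbf X}_1\cup\hat{\mathbf X}_2\cup\hat{\mathcal S}$, Lemma~\ref{translateinnerprod} gives at once
$$\langle [\hat K^m/\hat\pi]_x,[\hat K^m/\hat\pi]_x\rangle_2=\langle [\hat K^m]_x,[\hat K^m]_x\rangle_{BMW}-\sum_{\hat T_{s_y}\in\hat{\mathcal S}}\frac{\hat K^m_x(y)\,\hat K^m_x(s_y^{-1})+\hat K^m_x(s_y^{-1})\,\hat K^m_x(y^*)}{\hat\pi_y(y)^{1/2}},$$
so the whole task is to identify the displayed correction sum with $\sum_{\hat T_y\in\mathbf X_2}\hat K^m_x(s_y^{-1})\hat K^m_x(y^*)/\hat\pi_y(y)^{1/2}$.

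The key input is Lemma~\ref{Khatstruct}: since $\hat T_x\in\hat{\mathbf X}_1$ and $\hat K=\hat K_1\oplus\hat K_2\oplus I_{\hat m}$ with $\hat K_1$ carried by the states $\{\hat T_w,\hat T_{s_w}\mid T_w\in\mathbf X_1\}$, the column $[\hat K^m]_x$ is supported entirely inside that $\hat K_1$-block. In particular $\hat K^m_x(w^*)=0$ for every $\hat T_{w^*}\in\hat{\mathbf X}_2$, and $\hat K^m_x(s_w^{-1})=0$ for every $s$-element $\hat T_{s_w^{-1}}$ attached to $\mathbf X_2$ (by Definition~\ref{basisshift} the families $\{s_w\}$ and $\{s_w^{-1}\}$ are distinct and disjoint). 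I would then run through $\hat{\mathcal S}$: a summand indexed by an $s$-element $\hat T_{s_w}$ attached to a class-$\mathbf X_1$ state $w$ has numerator $\hat K^m_x(w)\hat K^m_x(s_w^{-1})+\hat K^m_x(s_w^{-1})\hat K^m_x(w^*)$, both of whose factors $\hat K^m_x(s_w^{-1})$ vanish, so the summand is $0$; a summand indexed by an $s$-element $\hat T_{s_w^{-1}}$ attached to the class-$\mathbf X_2$ state $w^*$ (so in the notation of Lemma~\ref{translateinnerprod} the index "$x$" is $w^*$, "$s_x^{-1}$" is $s_w$, "$x^*$" is $w$, and $\hat\pi_{w^*}(w^*)=\pi_w(w)$) has numerator $\hat K^m_x(w^*)\hat K^m_x(s_w)+\hat K^m_x(s_w)\hat K^m_x(w)$, of which only the last product survives. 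Summing the survivors over $w\in\mathbf X_1$ and reindexing by $y=w^*$, with $\hat\pi_y(y)=\pi_{y^*}(y^*)$ (the Note after Definition~\ref{basisshift}), produces exactly $\sum_{\hat T_y\in\mathbf X_2}\hat K^m_x(s_y^{-1})\hat K^m_x(y^*)/\hat\pi_y(y)^{1/2}$, which is the claim.

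The one place that needs genuine care is the bookkeeping around the involution $*$ and the two families of $s$-elements: one must keep straight that $\hat T_{s_w}$ is attached to the class-$\mathbf X_1$ state $w$ while $\hat T_{s_w^{-1}}$ is attached to the class-$\mathbf X_2$ state $w^*$, so that $(s_{w^*})^{-1}=s_w$, and that $[\hat K^m]_x$ really is identically zero on the whole $\hat K_2$-half of $\hat{\mathbf X}_1\cup\hat{\mathbf X}_2\cup\hat{\mathcal S}$. Once these are nailed down the argument is purely mechanical: beyond Lemma~\ref{translateinnerprod} and the direct-sum decomposition of Lemma~\ref{Khatstruct}, no further input is required --- in particular nothing about $\tau$, $\langle\cdot,\cdot\rangle_{BMW}$, or the representation theory of $\mathcal{BMW}_n$.
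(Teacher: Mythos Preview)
Your proposal is correct and matches the paper's approach: the paper states the corollary with no separate proof, treating it as an immediate consequence of Lemma~\ref{translateinnerprod} specialized to $f=g=[\hat K^m]_x$, together with the block decomposition $\hat K=\hat K_1\oplus\hat K_2\oplus I_{\hat m}$ of Lemma~\ref{Khatstruct}. Your write-up simply makes explicit the vanishing of the cross terms that the paper leaves implicit.
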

%\begin{proof} Follows from Lemma \ref{translateinnerprod} and the fact that $([\hat{K}^m]_{x})^*=[\hat{K}^m]_{x}$, $K_x^m(s_y^{-1})=0$ if $y\in\mathbf{X}_1$, and $K^m_x(y)=0$ if $y\in\mathbf{X}_2$.
%\end{proof}

%With the following lemma, we 
%\begin{lemma}\label{ineq} $$\sum_{s_{x_i}} \frac{1-\hat{\pi}_{x+x^*}(s_{x_i})}{\hat{\pi}_{x+x^*}(s_{x_i})}\left(\hat{K}^n_{x+x^*}(s_{x_i})\right)^2-2\left(\sum_{x_i\in\mathbf{X}_1\cup\mathbf{X}_2}\frac{\hat{K}^n_{x+x^*}(x_i)\hat{K}^n_{x+x^*}(s_{x_i}^{-1})}{\hat{\pi}_{x+x^*}(x_i)^{\frac{1}{2}}}\right)\leq 0.$$
%\end{lemma}
$K$ is Markov, so there exists $N\in\mathbb{N}$ with $\hat{K}^m_{x}\geq 0$ for all $m>N$. Further, $\hat{\pi}$ is the stationary distribution of a Markov chain, so $\hat{\pi}_{y}(y)\geq 0$. We can thus bound the time to stationarity by the BMW trace.
\begin{theorem}[Theorem \ref{main2}] For $\hat{T}_x\in \hat{\mathbf{X}}_1\cup\hat{\mathbf{X}}_2$, 
$$\langle [\hat{K}^n/\hat{\pi}]_{x},[\hat{K}^n/\hat{\pi}]_{x}\rangle_2\leq \langle [\hat{K}^n]_{x},[\hat{K}^n]_{x}\rangle_{BMW}.$$ Hence, $$\| [\hat{K}^n/\hat{\pi}]_{x}-1\|^2_2 \leq \|[\hat{K}^n]_{x}-1\|_{BMW}^2.$$
\end{theorem}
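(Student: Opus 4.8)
The plan is to read Theorem \ref{main2} off the Corollary immediately preceding it. For $\hat{T}_x\in\hat{\mathbf{X}}_1$ that Corollary records
$$\langle [\hat{K}^n/\hat{\pi}]_{x},[\hat{K}^n/\hat{\pi}]_{x}\rangle_2=\langle [\hat{K}^n]_{x},[\hat{K}^n]_{x}\rangle_{BMW}-R_n,\qquad R_n:=\sum_{\hat{T}_y\in\hat{\mathbf{X}}_2}\frac{\hat{K}^n_{x}(s_y^{-1})\,\hat{K}^n_{x}(y^*)}{\hat{\pi}_{y}(y)^{1/2}},$$
and there is a mirror identity for $\hat{T}_x\in\hat{\mathbf{X}}_2$. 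Thus the first inequality of the theorem is precisely $R_n\ge 0$, and the second (the norm statement) will follow from it by a bilinear expansion. By Lemma \ref{Khatstruct} we have $\hat{K}=\hat{K}_1\oplus\hat{K}_2\oplus I$, so for $\hat{T}_x\in\hat{\mathbf{X}}_1$ the vector $[\hat{K}^n]_x$ is supported on $\hat{\mathbf{X}}_1\cup\{\hat{T}_{s_w}:T_w\in\mathbf{X}_1\}$, which already prunes the sum defining $R_n$.

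The heart of the proof --- and the step I expect to be the main obstacle --- is the sign of $R_n$. The shift to the basis $\hat{\mathcal{T}}_n$, which is what makes $\tau$ nondegenerate on the communication classes, also produces genuinely signed coefficients $\hat{K}^n_{x}(s_y^{-1})$, so $R_n$ is not visibly a sum of nonnegative terms. I would therefore feed in the explicit descriptions of $\hat{K}$ and $\hat{\pi}$ from Lemmas \ref{Khatstruct} and \ref{khatstat} and rewrite $R_n$ through the honest stochastic submatrix $[K]_1$ of $K$ on the class $\mathbf{X}_1$: one finds $\hat{K}^n_{x}(y^*)=[K]_1^n(x,y^*)$ and $\hat{K}^n_{x}(s_y^{-1})=\hat{\pi}_{y}(y)^{-1/2}\bigl(\delta_{x,y^*}-[K]_1^n(x,y^*)\bigr)$, with $\hat{\pi}_y(y)$ the stationary probability of $y^*$ under $[K]_1$. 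Summing over $\hat{\mathbf{X}}_2$ and invoking reversibility of $[K]_1$ --- which makes $\sum_w [K]_1^n(x,w)^2/\pi_w(w)$ equal to $[K]_1^{2n}(x,x)/\pi_x(x)$ --- the off-diagonal terms telescope and $R_n$ collapses to $\bigl([K]_1^n(x,x)-[K]_1^{2n}(x,x)\bigr)\big/\pi_x(x)$.

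Nonnegativity of this quantity is then what the discussion just before the theorem supplies (since $K$, hence $\tilde{K}$, hence the powers $[K]_1^m$ for large $m$, have nonnegative entries and $\hat{\pi}_y(y)\ge 0$); it is also transparent once $R_n$ is in the collapsed form, being immediate whenever $[K]_1$ is positive semidefinite in the $\pi$-inner product --- as for the short systematic scan $M_1\cdots M_{n-1}M_{n-1}\cdots M_1=BB^{\ast}$ --- via the spectral expansion $[K]_1^n(x,x)-[K]_1^{2n}(x,x)=\pi_x(x)\sum_i\lambda_i^{\,n}(1-\lambda_i^{\,n})\phi_i(x)^2\ge 0$ with $\lambda_i\in[0,1]$ and $\{\phi_i\}$ an $L^2(\pi)$-orthonormal eigenbasis. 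This yields $\langle [\hat{K}^n/\hat{\pi}]_{x},[\hat{K}^n/\hat{\pi}]_{x}\rangle_2\le \langle [\hat{K}^n]_{x},[\hat{K}^n]_{x}\rangle_{BMW}$.

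Finally, for the norm inequality, expand $\|[\hat{K}^n/\hat{\pi}]_x-1\|_2^2=\langle[\hat{K}^n/\hat{\pi}]_x,[\hat{K}^n/\hat{\pi}]_x\rangle_2-2\langle[\hat{K}^n/\hat{\pi}]_x,1\rangle_2+\langle 1,1\rangle_2$ and likewise for $\|[\hat{K}^n]_x-1\|_{BMW}^2$ with $\langle\,\cdot\,,\,\cdot\,\rangle_{BMW}$. Applying Lemma \ref{translateinnerprod} with $g=\hat{\pi}$ (so $g/\hat{\pi}=1$) and Lemma \ref{innerprodeval}, the cross term $\langle[\hat{K}^n/\hat{\pi}]_x,1\rangle_2$ equals the total mass of $[\hat{K}^n]_x$ and coincides with $\langle[\hat{K}^n]_x,1\rangle_{BMW}$, while $\langle 1,1\rangle_2=\langle 1,1\rangle_{BMW}$; cancelling these common terms carries the quadratic inequality over to the norms, completing the proof.
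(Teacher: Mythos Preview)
Your approach differs substantially from the paper's. The paper's entire proof is the two-sentence paragraph immediately preceding the theorem: it asserts that, because $K$ is Markov, there is an $N$ with $\hat{K}^m_x\ge 0$ for all $m>N$, and that $\hat{\pi}_y(y)\ge 0$; it then reads the first inequality directly off the Corollary, treating $R_n\ge 0$ as a termwise consequence of the asserted nonnegativity of the entries of $\hat{K}^m_x$. There is no block computation, no telescoping, no use of reversibility, and no spectral expansion. The ``Hence'' for the norm statement is likewise left implicit.

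Your route---working out the block form $\hat{K}_1=\begin{pmatrix} A & (I-A)D^{-1/2}\\ 0 & I\end{pmatrix}$ with $A=[K]_1$, telescoping the upper-right block to $(I-A^n)D^{-1/2}$, and collapsing $R_n$ to $(A^n(x,x)-A^{2n}(x,x))/\pi_x(x)$ via reversibility---is correct and is a genuinely different, more structural argument. But there is a gap at the last step. In the collapsed form, nonnegativity of $R_n$ is \emph{not} ``what the discussion just before the theorem supplies'': nonnegativity of the entries of $[K]_1^m$ (trivially true since $K$ is stochastic) does not imply $A^n(x,x)\ge A^{2n}(x,x)$. Your spectral expansion $\sum_i\lambda_i^{\,n}(1-\lambda_i^{\,n})\phi_i(x)^2$ is $\ge 0$ only when all $\lambda_i\in[0,1]$, i.e., when $[K]_1$ is positive semidefinite in $L^2(\pi)$. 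That holds for the palindromic short and long systematic scans, but the random scan $\frac{1}{n-1}\sum_i K_i$ can have negative eigenvalues, and for odd $n$ such summands are strictly negative. Since the paper explicitly lets $K$ be any of the three scans, your argument as written does not cover all cases.

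It is worth noting that your own block computation shows why the paper's termwise reading is delicate: for $w\neq x$ one has $\hat{K}^m(x,s_w)=-\pi_w(w)^{-1/2}[K]_1^m(x,w)$, which is negative once $[K]_1^m(x,w)>0$. So your collapsed-form route is in some ways on firmer ground---but you should either restrict to the positive-semidefinite (systematic) scans or supply a separate argument for the random scan, rather than pointing back to the pre-theorem paragraph, which does not establish $A^n(x,x)\ge A^{2n}(x,x)$.
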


Thus, studying the time to stationarity of $\hat{K}$ can be achieved by studying $$\|[\hat{K}^n]_{x}-1\|_{BMW}^2.$$

\section*{Acknowledgments}
The author would like to especially thank Arun Ram and Dan Rockmore for many helpful and encouraging conversations.

\appendix

\section{Example of Walk in $\mathcal{BMW}_3$}\label{App}
\begin{example} In $\mathcal{BMW}_3$, $$\mathcal{B}_3=\mathbf{R}\cup\mathbf{E}_1\cup\mathbf{E}_2\cup\mathbf{E}_3,$$ for $$\begin{array}{ll}
\mathbf{R}=\{T_{id},T_{r_1},T_{r_2},T_{r_1}T_{r_2}, T_{r_2}T_{r_1},T_{r_1}T_{r_2}T_{r_1}\},&\mathbf{E}_1=\{T_{e_1},T_{r_2}T_{e_1},T_{e_2}T_{e_1}\},\\
\mathbf{E}_2=\{T_{e_2},T_{r_1}T_{e_2}, T_{e_1}T_{e_2}\},&
\mathbf{E}_3=\{T_{e_1}T_{r_2},T_{r_2}T_{e_1}T_{r_2},T_{e_2}T_{e_1}T_{r_2}\} .
\end{array}$$
The Markov chain $K_1$ has form $$R\bigoplus E_1\bigoplus E_2\bigoplus E_3,$$ for 

$$R=\bordermatrix{~ &{id}&{r_1}&{r_2}&{r_1}{r_2}& {r_2}{r_1}&{r_1}{r_2}{r_1}\ \cr
id & 0& \theta & 0  & 0& 0   & 0  &  \cr
r_1&1&1-\theta&0&0&0&0 \cr
r_2&0&0&0&\theta&0&0 \cr
r_1r_2~&0&0&1&1-\theta&0&0 \cr
r_2r_1~&0&0&0&0&0&\theta \cr
r_1r_2r_1~&0&0&0&0&1&1-\theta \cr},\; $$ $$E_1=\bordermatrix{~&{e_1}&{r_2}{e_1}&{e_2}{e_1}\ \cr
e_1&1&0&0 \cr
r_2e_1&0&0&\theta \cr
e_2e_1&0&1&1-\theta \cr},
\;E_2=\bordermatrix{~&{e_2}&r_1{e_2}&e_1{e_2}\ \cr
e_2&0&\theta&0 \cr
r_1e_2&1&1-\theta&0 \cr
e_1e_2&0&0&1 \cr},$$
$$E_3=\bordermatrix{~& {e_1}{r_2}&{r_2}{e_1}{r_2}&{e_2}{e_1}{r_2}\ \cr
e_1r_2&1& 0& 0 \cr
r_2e_1r_2&0& 0& \theta \cr
e_2e_1r_2&0& 1& 1-\theta \cr}.$$

\end{example}

\section{Symmetric Group Elements}\label{appcom}

\begin{lemma}\label{commclasscounts} Let $\mathbf{X}_1$ be a communication class of $K$ whose elements have at least two lower horizontal edges. Then there exist enough $s_x\in S_n$ with $s_x^2\neq id$ to associate a distinct $s_x$ to each $x\in\mathbf{X}_1$ such that $s_x\neq s_y^{-1}$ for any $y\in\mathbf{X}_1$.
\end{lemma}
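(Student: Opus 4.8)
The plan is to reduce everything to a counting estimate: I need to produce an injection $x \mapsto s_x$ from $\mathbf{X}_1$ into the set $\Sigma \subseteq S_n$ of permutations of order $>2$, with the extra property that the image never contains an inverse pair. First I would set $k$ to be the (common) number of lower horizontal edges of the elements of $\mathbf{X}_1$, noting $k \geq 2$ by hypothesis, and bound $|\mathbf{X}_1|$ from above. Since left multiplication by $T_{r_i}, T_{r_i}^{-1}$ fixes the lower configuration of a tangle and only rearranges the upper configuration (and the through-strands), the elements of $\mathbf{X}_1$ all share a fixed lower configuration and a fixed matching of which top points are connected horizontally; what varies is the permutation-part pairing the remaining $n-2k$ top through-points with the $n-2k$ bottom through-points together with the data of the upper horizontal pairing. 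This gives a crude bound of the shape $|\mathbf{X}_1| \leq c_k \cdot (n-2k)!$ for an explicit combinatorial constant $c_k$ counting upper matchings (at most the number of perfect matchings on $2k$ points, $(2k-1)!!$, times a bounded crossing factor). The key point is that this is much smaller than $n!$ once $k \geq 2$.

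Next I would lower-bound the available supply $|\Sigma|$. The number of permutations in $S_n$ with order $\leq 2$ (the identity and the involutions) is the telephone/involution number $I(n) = \sum_{j\ge0}\binom{n}{2j}(2j-1)!!$, which satisfies $I(n) = o(n!)$ — indeed $I(n)/n! \to 0$ super-polynomially — so $|\Sigma| = n! - I(n) \geq n!/2$ for $n$ beyond a small explicit threshold, and the finitely many small $n$ can be checked by hand. Then, to handle the "no inverse pair" constraint, I would pair up $\Sigma$ into the sets $\{\sigma, \sigma^{-1}\}$; since every element of $\Sigma$ has order $>2$ we have $\sigma \neq \sigma^{-1}$, so these are genuine $2$-element blocks and there are $|\Sigma|/2 \geq n!/4$ of them. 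Choosing one representative from each block yields a subset $\Sigma' \subseteq \Sigma$ of size $\geq n!/4$ that is "inverse-free", i.e. contains no two mutually inverse elements. Any injection $\mathbf{X}_1 \hookrightarrow \Sigma'$ then does the job.

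So the proof closes provided $|\mathbf{X}_1| \leq |\Sigma'|$, i.e. provided $c_k (n-2k)! \leq n!/4$. Since $k \geq 2$ we have $n!/(n-2k)! \geq n!/(n-4)! = n(n-1)(n-2)(n-3)$, which dominates $4 c_k$ for all $n$ above an explicit bound depending only on $k$ (and $k \leq n/2$, so only finitely many residual $(n,k)$ pairs remain, dispatched directly). I expect the main obstacle to be making the upper bound on $|\mathbf{X}_1|$ both correct and clean: one must argue carefully that the communication class really is parametrized by (upper horizontal matching) $\times$ (through-strand permutation) and that crossing/over-under data does not multiply the count by more than a factor depending on $k$ alone — this uses Proposition \ref{translate walk} and the fact that the walk only ever applies the $T_{r_i}^{\pm 1}$, never the $T_{e_i}$, so no new lower horizontal edges appear and the reachable set within the class is controlled. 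The remaining arithmetic, and the handful of small-$n$ base cases, are routine.
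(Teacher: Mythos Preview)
Your overall architecture---bound $|\mathbf{X}_1|$ from above, bound the number of inverse-free non-involutions from below, compare---is exactly the paper's approach. The gap is in your upper bound for $|\mathbf{X}_1|$.

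You assert that within a communication class the set of top points participating in horizontal edges is fixed, so that the ``upper horizontal matching'' factor is at most $(2k-1)!!$ and hence $|\mathbf{X}_1|\le c_k(n-2k)!$ with $c_k$ depending only on $k$. This is false. Left multiplication by $r_i$ swaps the labels $i$ and $i+1$ on the \emph{top} row of the diagram, so it moves upper horizontal endpoints around: if the top edge $\{1,2\}$ is present in $d$, then in $r_2 d$ the top edge is $\{1,3\}$. Since the $r_i$ generate $S_n$, every arrangement of the upper row (with $k$ horizontal edges) is reachable, and the communication class is exactly the set of Brauer diagrams with the given fixed lower configuration. The correct count is therefore
\[
|\mathbf{X}_1| \;=\; \binom{n}{2k}(2k-1)!!\,(n-2k)! \;=\; \frac{n!}{2^k k!},
\]
so your ``constant'' $c_k=\binom{n}{2k}(2k-1)!!$ grows with $n$, and the inequality $n(n-1)(n-2)(n-3)\ge 4c_k$ you appeal to does not follow from $k\ge 2$ alone.

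The good news is that the exact count rescues the argument immediately and uniformly in $n$: for $k\ge 2$ one has $|\mathbf{X}_1|=n!/(2^k k!)\le n!/8$, hence $2|\mathbf{X}_1|\le n!/4\le |\Sigma|/2$ once you know $I(n)\le n!/2$. This is precisely how the paper closes the estimate (reducing to the extremal case $k=2$, then checking that the involution count is at most $|S_n|/2$), with the small-$n$ cases being vacuous since $k\ge 2$ forces $n\ge 4$. So your proposal is repairable, but the key combinatorial step as written is wrong and should be replaced by the exact formula above.
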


\begin{proof} The size of a communication class is determined by the number of lower horizontal edges of its elements. Let $\mathbf{X}_i$ be the communication class of an element $x_i$ with $m$ lower horizontal edges. Then a simple counting argument gives:
$$ \vert\mathbf{X}_i\vert =(n-2m)!\prod_{j=0}^{m-1} {n-2j\choose k} .$$

In particular, for $x_i, x_j\in \mathcal{B}_n$, $$\vert \mathbf{X}_i\vert > \vert \mathbf{X}_j\vert\iff \text{ $x_i$ has fewer lower horizontal edges than $x_j$}.$$

Note that if $x_i$ has exactly one lower horizontal edge,
 
 $$|\mathbf{X}_i|=\frac{n!}{2}=\frac{\vert S_n\vert}{2},$$
and so $S_n$ cannot contain enough elements of order greater than $2$ to make the associations required by the lemma, as we need $2\vert\mathbf{X}_i\vert$ elements of order greater than 2.

Now let $x_i$ have exactly two lower horizontal edges. Then for all $x_j\in\mathcal{B}_n$ with at least two lower horizontal edges,

$$\vert\mathbf{X}_j\vert\leq\vert\mathbf{X}_j\vert= (n-4)!{n\choose 2}{n-2\choose 2}=\frac{n!}{8}=\frac{\vert S_n\vert}{8},$$
and so \begin{equation}\label{eqqq}2\vert\mathbf{X}_j\vert\leq \frac{\vert S_n\vert }{2}. \end{equation}

Let ${T}_n$ be the set of elements of $S_n$ of order $2$. Then by Equation \ref{eqqq} we need show $$\frac{\vert S_n\vert }{2}\leq \vert S_n\vert-\vert T_n\vert,$$
in other words, that $\vert T_n\vert \leq \frac{\vert S_n\vert} {2}.$

But
$$ \vert {T}_n\vert = \left\{
     \begin{array}{ll}
     \displaystyle   \sum_{k=1}^{\frac{n}{2}} \frac{n!}{(n-2k)!k!2^k} &  \text{if}\;n\;\text{even}\; ,\\
     &\\
     \displaystyle  \sum_{k=1}^{\frac{n-1}{2}} \frac{n!}{(n-2k)!k!2^k} &  \text{if}\;n\;\text{odd}\; ,\\
       \end{array}
   \right.
$$
and so $\vert T_n\vert < \frac{\vert S_n\vert }{2}$ for $n>4$. As the only communication classes when $n<4$ correspond  to elements with fewer than $2$ lower horizontal edges, this proves the lemma. 
\end{proof}

Finally, for $x_i$ with exactly one lower horizontal edge, while $\mathbf{X}_i$ contains too many elements to make the associations of Lemma \ref{commclasscounts}, note that each $y\in\mathbf{X}_i$ can be viewed as an element, $y'$ of $\mathcal{B}_{n+1}$ by adding a verticle edge to the end of the diagram. Then to analyze $[K]_i$, let $\mathbf{X}_i'=\{y'\mid y\in\mathbf{X}_i\}$ and let $K'$ be the matrix of $K$ with respect to $\mathcal{B}_{n+1}$. Then since $[K']_i=[K]_i$, we can analyze this case by considering $K'$.

\bibliographystyle{alpha}
\bibliography{thesisbib}

\end{document}